\DeclareMathAlphabet{\mathsf}{OT1}{\sfdefault}{m}{n}
\newcommand{\nocontentsline}[3]{}
\newcommand{\tocless}[2]{\bgroup\let\addcontentsline=\nocontentsline#1{#2}\egroup}
\def\dual#1{\expandafter\dual@aux#1\@nil}
\def\dual@aux#1/#2\@nil{\begin{tabular}{@{}c@{}}#1\\#2\end{tabular}}
\DeclareMathAlphabet{\amathbb}{U}{bbold}{m}{n}
\newtheoremstyle{teoremas}% <name>
{10pt}% <Space above>
{10pt}% <Space below>
{\itshape}% <Body font>
{}% <Indent amount>
{\bfseries}% <Theorem head font>
{}% <Punctuation after theorem head>
{.5em}% <Space after theorem headi>
{}% <Theorem head spec (can be left empty, meaning `normal')>
\theoremstyle{teoremas}
\newtheorem{theorem}{Theorem}[section]
\newtheorem{corollary}[theorem]{Corollary}
\newtheorem{lemma}[theorem]{Lemma}
\newtheorem{proposition}[theorem]{Proposition}
\newtheoremstyle{definition}% <name>
{10pt}% <Space above>
{10pt}% <Space below>
{}% <Body font>
{}% <Indent amount>
{\bfseries}% <Theorem head font>
{}% <Punctuation after theorem head>
{.5em}% <Space after theorem headi>
{}% <Theorem head spec (can be left empty, meaning `normal')>
\theoremstyle{definition}
\newtheorem{definition}[theorem]{Definition}
\newtheorem{question}[theorem]{Question}
\newtheorem{example}[theorem]{Example}
\newtheorem{remark}[theorem]{Remark}
\crefname{theorem}{theorem}{theorems}
\Crefname{theorem}{Theorem}{Theorems}
\crefname{lemma}{lemma}{lemmas}
\Crefname{lemma}{Lemma}{Lemmas}
\crefname{proposition}{proposition}{propositions}
\Crefname{proposition}{Proposition}{Propositions}
\tikzstyle{rectan} = [rectangle, rounded corners, 
\tikzstyle{ghost} = [circle, 
\DeclareMathOperator{\rk}{rk}
\newcommand{\M}{\mathsf{M}}
\newcommand{\U}{\mathsf{U}}
\newcommand{\K}{\mathsf{K}}
\newcommand{\Q}{\mathbb{Q}}
\newcommand{\R}{\mathbb{R}}
\newcommand{\CC}{\mathbb{C}}
\renewcommand{\H}{\mathrm{H}}
\newcommand{\G}{\mathcal{G}}
\newcommand{\cL}{\mathcal{L}}
\renewcommand{\emptyset}{\varnothing}
   \def\MR#1{}
\title{Building sets, Chow rings, and their Hilbert series}
\author[Eur, Ferroni, Matherne, Pagaria, and Vecchi]{Christopher Eur, Luis Ferroni, \\Jacob P. Matherne, Roberto Pagaria, and Lorenzo Vecchi}
\address{(C.~Eur)
    Carnegie Mellon University, Pittsburgh, PA, USA}
\email{ceur@cmu.edu}
\address{(L.~Ferroni)
  Universit\`a di Pisa, Pisa, Italy
}
\email{luis.ferroni@unipi.it}
\address{(J. P. Matherne)
North Carolina State University, Raleigh, NC, USA
}
\email{jpmather@ncsu.edu}
\address{(R.~Pagaria)
    Universit\`a di Bologna, Bologna, Italy}
\email{roberto.pagaria@unibo.it}
\address{(L. Vecchi)
  KTH Royal Institute of Technology, Stockholm, Sweden
}
\email{lvecchi@kth.se}
\begin{document}

\allowdisplaybreaks

\begin{abstract}
    We establish formulas for the Hilbert series of the Chow ring of a polymatroid using arbitrary building sets.  For braid matroids and minimal building sets, our results produce new formulas for the Poincar\'e polynomial of the moduli space $\overline{\mathcal{M}}_{0,n+1}$ of pointed stable rational curves, and recover several previous results by Keel, Getzler, Manin, and Aluffi--Marcolli--Nascimento. We also use our methods to produce examples of matroids and building sets for which the corresponding Chow ring has Hilbert series with non-log-concave coefficients.  This contrasts with the real-rootedness and log-concavity conjectures of Ferroni--Schr\"oter for matroids with maximal building sets, and of Aluffi--Chen--Marcolli for braid matroids with minimal building sets. 
\end{abstract}

\subjclass[2020]{Primary: 05B35, 13D40, 14C15. Secondary: 16S37}

\keywords{Matroids, hyperplane arrangements, Chow rings, building sets, Hilbert series, log-concavity, moduli space of stable pointed curves}

\maketitle

\section{Introduction}
A \emph{polymatroid} $\M$ is a pair $(E,\rk)$ consisting of a finite set $E$, called the \emph{ground set}, and a function $\rk\colon 2^E\to \mathbb{Z}_{\geq 0}$, called the \emph{rank function}, satisfying the following properties:
    \begin{itemize}
        \item $\rk(\varnothing) = 0$,
        \item if $A_1\subseteq A_2$, then $\rk(A_1)\leq \rk(A_2)$, and
        \item for all $A_1,A_2\subseteq E$, one has
            \[ \rk(A_1) + \rk(A_2) \geq \rk(A_1\cup A_2) + \rk(A_1\cap A_2).\]
    \end{itemize}
If furthermore $\rk(A) \leq |A|$ for every $A\subseteq E$, then $\M$ is a \emph{matroid}.
A \emph{flat} of $\M$ is a subset $F\subseteq E$ such that
$\rk(F\cup \{e\}) > \rk(F)$ for all $e\in E\setminus F$.
The set of all flats of $\M$ ordered by inclusion is a lattice, denoted $\mathcal L(\M)$.
We will always assume that a polymatroid $\M$ is loopless, i.e.\ that $\emptyset \in \mathcal L(\M)$. We refer to \cite{welsh,schrijver} for detailed treatments of polymatroids.

\smallskip
A subset $\mathcal G \subseteq \mathcal L(\M)$ of nonempty flats is a \emph{building set} if it satisfies a factorability condition (see Definition~\ref{def:building-set}).  To any  matroid $\M$ and building set $\mathcal G$, Feichtner and Yuzvinsky \cite{feichtner-yuzvinsky} associated a \emph{Chow ring} $D(\M,\mathcal G)$ (see Definition~\ref{def:chowring}), motivated by the geometry of wonderful compactifications of De Concini and Procesi \cite{deconcini-procesi}; this construction was later extended to polymatroids with arbitrary building sets by Pagaria and Pezzoli in \cite{pagaria-pezzoli}.
These are Artinian graded rings $D(\M,\mathcal G) = \bigoplus_i D^i(\M,\mathcal G)$ whose Hilbert series are denoted
\[
\H_\M^\mathcal{G}(x) := \sum_{i \geq 0}\dim \left(D^i(\M, \mathcal{G})\right)x^i.
\]
Let us highlight two particular instances of the Chow ring $D(\M,\mathcal G)$:
\begin{itemize}
\item When $\M$ is a matroid, the Chow ring $D(\M,\mathcal G_{\max})$ with respect to the maximal building set $\mathcal G_{\max}$ is the \emph{Chow ring of the matroid} $\M$ as given in \cite[Definition~1.3]{adiprasito-huh-katz}.  It is a central object in the Hodge theory of matroids \cite{adiprasito-huh-katz} that resolved the Heron--Rota--Welsh conjecture \cite{rota,heron,welsh} on the log-concavity of the coefficients of the characteristic polynomial of a matroid.
\item When $\M$ is a braid matroid $\mathsf K_n$, i.e.\ the graphic matroid associated to a complete graph on $n$ vertices, the ring $D(\mathsf K_n,\mathcal G_{\min})$ with respect to the minimal building set $\mathcal G_{\min}$ is isomorphic to the Chow ring of the Deligne--Knudsen--Mumford moduli space $\overline{\mathcal{M}}_{0,n+1}$ of stable rational curves with $(n+1)$ marked points. 
\end{itemize}
A myriad of works have studied the Hilbert series $\H_\M^{\mathcal G}(x)$ in these two cases:
\begin{itemize}
\item For matroids with maximal building sets, see \cite{backman-eur-simpson, semismall, ferroni-schroter,ferroni-matherne-stevens-vecchi} for proofs of combinatorial interpretations, valuativity, and general recursions; \cite{stump,ferroni-matherne-vecchi} for inequalities for Chow polynomials in the broader context of graded posets; \cite{angarone-nathanson-reiner, liao} for equivariant counterparts; and \cite{hameister-rao-simpson,hoster,branden-vecchi} for results that pertain to special cases such as uniform matroids. 
\item For the braid matroid $\mathsf K_n$, three different formulas for the Hilbert series $\H_{\K_n}^{\mathcal{G}_{\min}}(x)$ have been found by Manin \cite{manin}, Keel \cite{keel}, and more recently by Aluffi, Marcolli, and Nascimento \cite{aluffi-marcolli-nascimento}, all via the geometry of the space $\overline{\mathcal{M}}_{0,n+1}$.
\end{itemize}

The ring $D(\M,\mathcal{G})$ is known to satisfy a trio of properties known as the \emph{K\"ahler package} \cite[Section~4]{pagaria-pezzoli}; see also \cite{ardila-denham-huh} and \cite{crowley-huh-larson-simpson-wang}. One of the components of the package, the Hard Lefschetz theorem, implies that the sequence of coefficients of the polynomial $\H_{\M}^{\mathcal{G}}(x)$ is symmetric and unimodal.
Under the additional assumption that $\M$ is a matroid and $\mathcal{G}$ is the maximal building set, it was conjectured by Ferroni and Schr\"oter in \cite{ferroni-schroter} that the corresponding Hilbert series are real-rooted polynomials. Real-rootedness is the strongest in the hierarchy of properties depicted in Figure~\ref{fig:hierarchy} (see \cite{branden,brenti-unimodality,stanley-unimodality}). The weaker property of $\gamma$-positivity was established in this special case \cite{ferroni-matherne-stevens-vecchi}.

\begin{figure}[ht]\scalebox{0.7}{
\centering
\begin{tikzpicture}
\tikzset{node distance = 2.0cm and 1cm}
\tikzstyle{arrow} = [-{>[scale=1.8,
          length=2,
          width=3.5]},double]

\node (real-rooted) [rectan] {Real-rootedness};
\node (log-concave) [rectan, right =of real-rooted, yshift=1.0cm] {Log-concavity};
\node (gamma-positivity) [rectan, right =of real-rooted, yshift=-1.0cm] {$\gamma$-positivity};
\node (unimodal) [rectan, right =of log-concave, yshift=-1.0cm] {Unimodality};

\draw[arrow] (real-rooted) |- (log-concave);
\draw[arrow] (real-rooted) |- (gamma-positivity);
\draw[arrow] (gamma-positivity) -| (unimodal);
          
\draw[arrow] (log-concave) -| (unimodal);

\end{tikzpicture}}\caption{Hierarchy of properties for palindromic polynomials}\label{fig:hierarchy}
\end{figure}
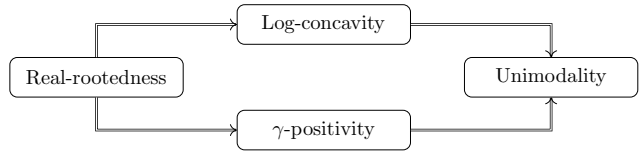

In the general setting of arbitrary building sets, all of the properties appearing in Figure~\ref{fig:hierarchy}, except unimodality, fail to be true. The failure of real-rootedness and $\gamma$-positivity is easily found: when $\M$ is a uniform matroid of rank $k\geq 3$ and $\mathcal{G} = \mathcal{G}_{\min}$, one has $\H_{\M}^{\mathcal{G}}(x)=1+x+\cdots + x^{k-1}$. The failure of log-concavity will be given in Theorem~\ref{thm:noLC} (with more details in Section~\ref{sec:eg}).  The construction of this example is an application of some recursions we develop that generalize those in \cite{ferroni-matherne-stevens-vecchi} to the case of arbitrary building sets.

\subsection{Main results}

Our first main results consist of a pair of recursive formulas for the Hilbert series $\H_\M^{\mathcal G}(x)$ of the Chow ring of an arbitrary building set $\mathcal G$ on a polymatroid $\M$.  Let us fix additional notation to state it.
The \emph{characteristic polynomial} of a polymatroid $\M$, denoted $\chi_{\M}(x)$, is defined as
\[ \chi_{\M}(x) = \sum_{F\in \mathcal{L}(\M)} \mu(\varnothing, F)\, x^{\rk(E) - \rk(F)},\]
where $\mu$ denotes the M\"obius function of the poset $\mathcal{L}(\M)$ (see \cite[Chapter~3]{stanley-ec1}). In the case of a matroid realized as a complex hyperplane arrangement, the characteristic polynomial of a matroid has, up to an alternating factor, the same coefficients as the Poincar\'e polynomial of the cohomology of the arrangement complement (see~\cite{orlik-solomon}).  
The set of \emph{$\mathcal G$-factors} of a building set $\mathcal G$ on $\M$, denoted $f(\mathcal G)$, consists of all maximal elements in the subset $\mathcal G$ of the poset $\mathcal L(\M)$.
For a flat $F$ of $\M$, a building set $\mathcal G$ naturally induces building sets $\mathcal G|_F$ and $\mathcal G/F$ on the restriction $\M|_F$ and the contraction $\M/F$ polymatroids, respectively (see Definition~\ref{def:inducedbuildingsets}).
If $\M$ is the empty polymatroid, we set $\H_\M^{\mathcal G}(x) = 1$.
  
\begin{theorem}
\label{thm:recursion formula-main}
For every nonempty polymatroid $\M$ and every building set $\mathcal{G}$, the following two recursions hold:
\begin{equation}\label{eq:recursion-main}
\H_\M^\mathcal{G}(x) = \sum_{\emptyset \neq F \in \mathcal L(\M)} \frac{-\chi_{\M|_F}(x)}{(1-x)^{|f(\mathcal{G}|_F)|}} \cdot \H_{\M/F}^{\mathcal{G}/F}(x),
\end{equation}
and
\begin{equation}\label{eq:dualrecursion-main}
\H_\M^{\mathcal{G}}(x) = \sum_{E\neq F \in \mathcal L(\M)} \H_{\M|_F}^{\mathcal{G}|_F}(x) \cdot \frac{-\chi_{\M/F}(x)}{(1-x)^{\lvert f(\G/F) \rvert}}.
\end{equation}
\end{theorem}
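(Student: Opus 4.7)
My plan is to prove \eqref{eq:recursion-main} by a regrouping argument applied to the Feichtner--Yuzvinsky monomial basis of $D(\M,\G)$; the dual recursion \eqref{eq:dualrecursion-main} should then follow from an analogous argument that swaps the roles of the bottom and the top of nested sets.

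The Feichtner--Yuzvinsky presentation provides a $\mathbb Z$-basis for $D(\M,\G)$ consisting of monomials $\prod_{G\in S}x_G^{a_G}$ indexed by pairs $(S,\mathbf a)$, where $S\subseteq\G$ is a nested set and $1\le a_G\le d(G,S)-1$ with local codimension $d(G,S)=\rk(G)-\rk\bigl(\bigvee\{G'\in S:G'\lneq G\}\bigr)$. Summing $x^{\sum a_G}$ over these basis elements yields
\[
\H_\M^\G(x)\;=\;\sum_{S\in \mathcal N(\G)}\prod_{G\in S}\frac{x-x^{d(G,S)}}{1-x}.
\]
I would regroup this sum according to the flat $F(S):=\bigvee_{G\in S_{\min}} G\in \mathcal L(\M)$, where $S_{\min}$ is the set of minimal elements of $S$ (setting $F(\emptyset):=\emptyset$). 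The factorability axiom for building sets ensures that $S_{\min}\subseteq f(\G|_F)$, that $[\emptyset,F]$ splits as a product $\prod_{G\in f(\G|_F)}[\emptyset,G]$, and that the elements of $S$ not contained in $F$ project bijectively onto a nested set of $\G/F$. Consequently, each regrouped summand factors as a ``lower'' contribution depending on $F$ times $\H_{\M/F}^{\G/F}(x)$.

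The main obstacle is the explicit identification of the lower contribution with $-\chi_{\M|_F}(x)/(1-x)^{|f(\G|_F)|}$. The subtlety is that $S_{\min}$ need not equal all of $f(\G|_F)$; it may be any subset whose join equals $F$, so one must sum over such subsets and perform an inclusion--exclusion. Using the product decomposition of $[\emptyset,F]$ afforded by factorability, the computation reduces to the ``irreducible'' case $|f(\G|_F)|=1$, where the desired identity becomes a standard instance of Möbius inversion in $\mathcal L(\M|_F)$ combined with the geometric-series generating function for the exponent $a_G$, and this follows directly from the defining sum of $\chi_{\M|_F}(x)$.

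For \eqref{eq:dualrecursion-main}, the same outline applies after replacing the invariant ``join of minima of $S$'' with a suitable ``top'' invariant of $S$. The regrouping then interchanges the roles of the restriction and contraction polymatroids, producing a sum over $F\neq E$ whose summands carry the lower factor $\H_{\M|_F}^{\G|_F}(x)$ and the upper factor $-\chi_{\M/F}(x)/(1-x)^{|f(\G/F)|}$, as claimed.
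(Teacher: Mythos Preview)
Your regrouping by $F(S)=\bigvee S_{\min}$ does not produce the factor $-\chi_{\M|_F}(x)/(1-x)^{|f(\G|_F)|}$ that \eqref{eq:recursion-main} demands. Take $\M=\U_{2,n}$ with $\G=\G_{\max}$ and $F=E$: since $E\in\G$, nestedness forces $S_{\min}=\{E\}$ and hence $S_{\text{low}}=\{E\}$, so the entire lower contribution is the single geometric factor $(x-x^{2})/(1-x)=x$. But the recursion asks for $\overline{\chi}_{\U_{2,n}}(x)=x-(n-1)$ in that slot. No M\"obius inversion over $\mathcal L(\M|_F)$ can enter here, because once $F(S)=F$ is fixed the lower part is completely rigid: one checks that $S_{\min}$ is always forced to equal \emph{all} of $f(\G|_F)$ (not a proper subset, contrary to what you write), and that no other element of $S$ lies below $F$, so the lower contribution is exactly $\prod_{H\in f(\G|_F)}(x+x^2+\cdots+x^{\rk(H)-1})$, with the characteristic polynomial of $\M|_F$ nowhere in sight.

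This product is, up to sign, the value $\alpha^{\G}_{\varnothing F}$ of an auxiliary element of the incidence algebra that the paper introduces precisely to bridge the gap you are glossing over. The paper proves two separate facts: first, $\zeta\cdot\overline{\chi}^{\G}=\alpha^{\G}$, a genuine M\"obius computation carried out factor-by-factor in each interval $[\varnothing,F]$---this is where $\chi$ actually appears; second, $\alpha^{\G}\cdot\H^{\G}=-\zeta$, obtained from the FY basis by an inclusion--exclusion over subsets $T\subseteq\min(N)$ rather than by a partition of nested sets. Only after combining these in the incidence algebra does one get $\overline{\chi}^{\G}\cdot\H^{\G}=-\delta$, from which both recursions drop out immediately. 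Your proposal collapses these two steps into one and thereby loses the M\"obius contribution; the same defect carries over to your sketch for \eqref{eq:dualrecursion-main}.
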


We prove these recursions using a result of Pagaria--Pezzoli in \cite{pagaria-pezzoli} which consists of a Gr\"obner basis computation leading to a basis of $D(\M,\mathcal{G})$ as a $\mathbb{Q}$-vector space. Theorem~\ref{thm:recursion formula-main} is equivalent to the statement (Theorem~\ref{thm:g-chow-and-g-reduced}) that two particular elements in the incidence algebra of $\mathcal{L}(\M)$ are inverses. Thus, using the formalism of incidence algebras, we derive a non-recursive formula as a sum over chains of flats.

\begin{corollary}\label{cor:chains}
The Hilbert series of $D(\M,\mathcal{G})$ equals
\[
    \H_\M^{\mathcal{G}}(x) = \sum_{ m\geq 0}\, \sum_{\varnothing = F_0  \subsetneq \cdots \subsetneq F_{m+1} = E} (-1)^m \prod_{i=1}^{m+1} \frac{\chi_{\M|_{F_i}/F_{i-1}}(x)}{(1-x)^{|f(\mathcal{G}|_{F_i}/F_{i-1})|}}.
\]
\end{corollary}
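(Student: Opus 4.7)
The plan is to iterate the recursion~\eqref{eq:recursion-main} of Theorem~\ref{thm:recursion formula-main} and encode the iteration inside the incidence algebra $\mathcal{I}(\mathcal{L}(\M))$ with coefficients in $\mathbb{Q}(x)$, as suggested by the discussion preceding the statement. Define two elements $\alpha, \beta \in \mathcal{I}(\mathcal{L}(\M))$ by
$$\beta(F,G) = \H_{\M|_G/F}^{\mathcal{G}|_G/F}(x) \quad (F \leq G), \qquad \alpha(F,G) = \frac{-\chi_{\M|_G/F}(x)}{(1-x)^{|f(\mathcal{G}|_G/F)|}} \quad (F < G),$$
together with $\alpha(F,F) = 0$. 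Since $\M|_F/F$ is the empty polymatroid we have $\beta(F,F) = 1$.

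The first task is to apply Theorem~\ref{thm:recursion formula-main} not only to $\M$ but to every interval polymatroid $\M|_G/F$ equipped with the induced building set $\mathcal{G}|_G/F$. For this one needs the transitivity identities
$$(\M|_G/F)|_{H/F} = \M|_H/F, \qquad (\mathcal{G}|_G/F)|_{H/F} = \mathcal{G}|_H/F,$$
for $F \leq H \leq G$, which one extracts by unravelling Definition~\ref{def:inducedbuildingsets}. Under this identification the flats of $\M|_G/F$ are exactly the $H \in \mathcal{L}(\M)$ with $F \leq H \leq G$, and applying~\eqref{eq:recursion-main} to $\M|_G/F$ gives
$$\beta(F,G) = \sum_{F < H \leq G} \alpha(F,H)\,\beta(H,G) \qquad (F < G).$$

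Combined with $\beta(F,F) = 1$, this identity reads $\beta = \delta + \alpha * \beta$ in $\mathcal{I}(\mathcal{L}(\M))$, where $\delta$ and $*$ are the identity and convolution; equivalently, $\beta = (\delta - \alpha)^{-1}$. Because $\alpha$ vanishes on the diagonal and $\mathcal{L}(\M)$ is finite, this inverse is a finite geometric series
$$\beta = \sum_{k \geq 0} \alpha^{*k}, \qquad \alpha^{*k}(F,G) = \sum_{F = F_0 < F_1 < \cdots < F_k = G} \prod_{i=1}^k \alpha(F_{i-1},F_i).$$
Evaluating at $(F,G) = (\varnothing, E)$, setting $k = m+1$, and factoring the minus signs out of $\alpha$ yields the asserted sum over chains.

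The main obstacle is the verification of the transitivity identities above at the level of induced building sets. Although each is a routine check with Definition~\ref{def:inducedbuildingsets}, together they are what allow Theorem~\ref{thm:recursion formula-main} to be applied uniformly on every interval $[F,G] \subseteq \mathcal{L}(\M)$, which is precisely what converts the iteration into the invertibility statement $\beta = (\delta - \alpha)^{-1}$ in the incidence algebra.
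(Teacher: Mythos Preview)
Your argument is correct and is essentially the paper's own proof. The paper packages the identity $\beta=(\delta-\alpha)^{-1}$ as Theorem~\ref{thm:g-chow-and-g-reduced} (noting explicitly that it is equivalent to Theorem~\ref{thm:recursion formula-main}) and then cites a standard reference for the chain expansion of an inverse in an incidence algebra; you instead re-derive the inverse relation from the recursion~\eqref{eq:recursion-main} applied on every interval and write out the geometric series by hand, but the content is the same.
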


In the preceding statement, the sum runs over all possible chains of flats anchored at the bottom and top elements. We also provide a more efficient way of computing the polynomial $\H_{\M}^{\mathcal{G}}(x)$, in terms of $\mathcal{G}$-\emph{nested sets} (Definition~\ref{defn:nested}) that are \emph{spanning} in the sense that the join of its elements is the ground set $E$.

\begin{theorem}
\label{thm:H_sum_G_nested}
The Hilbert series of $D(\M,\mathcal{G})$ equals
\[    \H_\M^{\mathcal{G}}(x) = \sum_{ S } \prod_{F\in S} \overline{\chi}_{\M|_{F}/\sup(S,F)}(x),\]
where the sum runs over all spanning $\G$-nested sets, and $\sup(S,F)$ denotes the join of all the elements in $S\subseteq \mathcal{G}$ which are strictly smaller than $F$.
\end{theorem}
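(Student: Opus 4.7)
The plan is to prove Theorem~\ref{thm:H_sum_G_nested} by induction on $|E|$, using the recursion \eqref{eq:recursion-main}. The base case $E = \varnothing$ is immediate: $\H_\M^\mathcal{G}(x) = 1$ and the only spanning $\mathcal{G}$-nested set is $\varnothing$, contributing the empty product~$1$.

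For the inductive step, I first invoke the building-set factorization. Since the interval $[\varnothing, F]$ of $\mathcal{L}(\M)$ decomposes as $\prod_{G \in f(\mathcal{G}|_F)}[\varnothing, G]$, the characteristic polynomial factors as $\chi_{\M|_F}(x) = \prod_{G \in f(\mathcal{G}|_F)} \chi_{\M|_G}(x)$. Writing $\overline{\chi}_{\mathsf{N}}(x) := \chi_{\mathsf{N}}(x)/(x-1)$ for the reduced characteristic polynomial (a polynomial whenever $\mathsf{N}$ is a loopless polymatroid of positive rank), the factor appearing in \eqref{eq:recursion-main} becomes
\[
\frac{-\chi_{\M|_F}(x)}{(1-x)^{|f(\mathcal{G}|_F)|}} = (-1)^{|f(\mathcal{G}|_F)|+1} \prod_{G \in f(\mathcal{G}|_F)} \overline{\chi}_{\M|_G}(x).
\]
Substituting this and applying the inductive hypothesis to each $\H_{\M/F}^{\mathcal{G}/F}(x)$, I obtain a sum indexed by pairs $(F, T)$ with $F \in \mathcal{L}(\M) \setminus \{\varnothing\}$ and $T$ a spanning $(\mathcal{G}/F)$-nested set on $\M/F$.

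The next step is to reorganize this sum by spanning $\mathcal{G}$-nested sets of $\M$. To each pair $(F, T)$, I associate $S := f(\mathcal{G}|_F) \sqcup \operatorname{lift}(T) \subseteq \mathcal{G}$, where $\operatorname{lift}$ pulls each $K/F \in T$ back to its preimage in $\mathcal{G}$. Compatibility of building sets with restriction and contraction guarantees that $S$ is a spanning $\mathcal{G}$-nested set of $\M$ and that the product of $\overline{\chi}$-factors contributed by $(F, T)$ equals $\prod_{G \in S} \overline{\chi}_{\M|_G/\sup(S,G)}(x)$ --- a quantity depending only on $S$. Moreover, the pairs $(F, T)$ producing a fixed $S$ are in bijection with the nonempty subsets $A \subseteq \operatorname{Min}(S)$, via $F = \bigvee A$ and $A = f(\mathcal{G}|_F)$. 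The induction then reduces to the identity
\[
\sum_{\varnothing \neq A \subseteq \operatorname{Min}(S)} (-1)^{|A|+1} = 1,
\]
which is the usual inclusion--exclusion identity for nonempty finite sets.

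The main obstacle is the bijection claim: showing that the pairs $(F, T)$ giving rise to a given $S$ are precisely parametrized by nonempty $A \subseteq \operatorname{Min}(S)$. That any valid antichain $A$ satisfies $A \subseteq \operatorname{Min}(S)$ is direct: if $G \in A$ and $G' < G$ in $S$, then $G' \leq \bigvee A$ would force $G' \in A$, violating antichainness. The converse --- that every nonempty $A \subseteq \operatorname{Min}(S)$ gives a valid pair --- uses the defining property of $\mathcal{G}$-nestedness (every antichain of $S$ is the set of $\mathcal{G}$-factors of its join), together with the structural fact that non-minimal elements of $S$ sit strictly above the joins of their descendants in $S$. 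One must also verify the $\overline{\chi}$-factor identification $(\M/F)|_{K/F}/\sup(T, K/F) \cong \M|_K/\sup(S, K)$, which follows from the standard isomorphism between intervals of $\mathcal{L}(\M/F)$ and intervals of $\mathcal{L}(\M)$ above $F$.
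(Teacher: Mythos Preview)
Your overall strategy is sound, but there is a genuine gap at the ``lift'' step. An element $K \in \mathcal{G}/F$ is by definition of the form $G \vee F$ for \emph{some} $G \in \mathcal{G}$ with $G \not\leq F$, and such $G$ is not unique in general, so ``pulling back to its preimage in $\mathcal{G}$'' does not define a map. What is true is that there is a \emph{canonical} choice: each $K \in \mathcal{G}/F$ has a unique $\mathcal{G}$-factor $G_K \in f(\mathcal{G}_{\leq K})$ not contained in $F$, and this $G_K$ satisfies $G_K \vee F = K$. With this lift one can indeed set up a bijection between pairs $(F,T)$ and pairs $(S,A)$ with $\varnothing \neq A \subseteq \operatorname{Min}(S)$, but both this and the $\overline{\chi}$-factor identification require more than ``the standard isomorphism between intervals above $F$''. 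For $G \in S \setminus A$ the interval $[\sup(S,G), G]$ need not lie above $F$ at all (take $G \in \operatorname{Min}(S)\setminus A$); to match it with $[\sup(T, G\vee F), G\vee F]$ one must use the building-set factorization $[\varnothing, G \vee F] \cong [\varnothing,G] \times \prod_{a \in C_G}[\varnothing,a]$, where $C_G = \{a \in A : a \not< G\}$, under which the second interval collapses to the first.

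The paper takes a different and considerably shorter route: it inducts via the recursion~\eqref{eq:dualrecursion-main} rather than~\eqref{eq:recursion-main}. When $E \in \mathcal{G}$ this reads $\H_\M^{\mathcal{G}} = \sum_{F \neq E} \H_{\M|_F}^{\mathcal{G}|_F}\,\overline{\chi}_{\M/F}$, and since $\mathcal{G}|_F$ is literally a subset of $\mathcal{G}$, spanning $\mathcal{G}|_F$-nested sets $S'$ already sit inside $\mathcal{G}$; the correspondence is simply $S' \leftrightarrow S' \cup \{E\}$, with $F = \bigvee S' = \sup(S' \cup \{E\}, E)$. No lift, no interval comparison, no inclusion--exclusion. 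The case $E \notin \mathcal{G}$ is handled separately by multiplicativity over the $\mathcal{G}$-factors. Your approach via~\eqref{eq:recursion-main} can be completed along the lines above, but restriction of building sets is far better behaved than contraction, and the paper exploits exactly this asymmetry.
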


In the above theorem $\overline{\chi}_{\M}(x)$ stands for the \emph{reduced} characteristic polynomial of $\M$, which is defined as $\frac{1}{x-1} \chi_{\M}(x)$ for each non-empty polymatroid $\M$. The formulas in Corollary~\ref{cor:chains} and Theorem~\ref{thm:H_sum_G_nested} are useful for computing Hilbert series of rings $D(\M,\mathcal{G})$ when the characteristic polynomials of the intervals of $\mathcal{L}(\M)$ obey a predictable pattern. A central example is when $\M = \mathsf{K}_n$: in this case, the reduced characteristic polynomials of intervals of flats are products of polynomials of the form $(x-2)(x-3)\cdots(x-r)$.
When $\mathcal{G}=\mathcal{G}_{\min}$ is the minimal building set, the wonderful variety (see Section~\ref{sec:geom}) is the moduli space $\overline{\mathcal{M}}_{0,n+1}$. Our results lead us to a new closed formula for its Poincar\'e polynomial. 

\begin{theorem}
\label{thm:new_Poinc_M_0n}
The Poincaré polynomial of $\overline{\mathcal{M}}_{0,n+1}$ is
\[P_{\overline{\mathcal{M}}_{0,n+1}}(x) = \sum_{\lambda \vdash [n-1]} \frac{(n-1+\ell(\lambda))!}{(n-1)!} \prod_{i=1}^{\ell(\lambda)} \frac{ \overline{\chi}_{\lambda_i+1}(x)}{\lambda_i +1},\]
where the sum is over set partitions of the set $[n-1] = \{1, \dotsc, n-1\}$ into $\ell(\lambda)$ nonempty parts, $\lambda_i$ is the cardinality of the $i$-th part, and $\overline{\chi}_{m}(x) := (x-2)(x-3)\dotsb (x-m+1)$.
\end{theorem}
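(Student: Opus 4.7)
The plan is to apply Theorem~\ref{thm:H_sum_G_nested} to $(\K_n,\G_{\min})$, recognize the resulting combinatorial sum as one over labeled rooted trees, and convert it into the claimed set-partition formula via Lagrange inversion. Throughout I use the identification $P_{\overline{\mathcal{M}}_{0,n+1}}(x) = \H_{\K_n}^{\G_{\min}}(x)$ recalled in the introduction.

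First I would identify the spanning $\G_{\min}$-nested sets of $\K_n$. The connected flats of $\K_n$ are indexed by subsets $B \subseteq [n]$ with $|B| \ge 2$, so a $\G_{\min}$-nested set is a laminar family of such subsets, and it is spanning precisely when $[n]$ appears in it as its (necessarily unique) top element. Such families are in natural bijection with rooted trees $T$ whose $n$ leaves are labeled bijectively by $[n]$ and whose internal nodes each have out-degree at least $2$: the internal nodes of $T$ correspond to the sets in the family ordered by containment, with each $i \in [n]$ attached as a leaf to the smallest containing set. A short unpacking shows that, for $B \in S$ whose maximal proper descendants in $S$ are $B_1,\dots,B_j$, the flat $\sup(S,B)$ is the partition of $B$ with non-singleton blocks $B_1,\dots,B_j$, so that $\M|_B/\sup(S,B) \cong \K_{m_B}$ where $m_B := j + |B| - \sum|B_i|$ is exactly the out-degree of the node $B$ in the tree. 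Theorem~\ref{thm:H_sum_G_nested} thus gives
\[
    P_{\overline{\mathcal{M}}_{0,n+1}}(x) = \sum_T \prod_{v \text{ internal}} \overline{\chi}_{\deg(v)}(x),
\]
summed over such rooted trees.

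Next, I would encode this sum as the exponential generating function $\mathcal{T}(z) := \sum_{n \ge 1} P_{\overline{\mathcal{M}}_{0,n+1}}(x)\,z^n/n!$, where the $n=1$ contribution is taken to be $z$. Dissecting a tree at its root yields the functional equation $\mathcal{T}(z) = z + H(\mathcal{T}(z))$ with $H(y) := \sum_{k \ge 2} \overline{\chi}_k(x)\,y^k/k!$, and Lagrange inversion applied to $z = \mathcal{T}(z) - H(\mathcal{T}(z))$ gives
\[
    P_{\overline{\mathcal{M}}_{0,n+1}}(x) = (n-1)!\,[y^{n-1}]\bigl(1 - H(y)/y\bigr)^{-n}.
\]
Expanding the right-hand side as a geometric series in $H(y)/y$ and extracting the $y^{n-1}$ coefficient produces a sum over ordered tuples $(m_1,\dots,m_\ell)$ of integers $\ge 2$ satisfying $\sum(m_i - 1) = n - 1$.

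Finally, I would reindex this as a sum over set partitions of $[n-1]$. Substituting $\lambda_i := m_i - 1$ turns the tuples into compositions of $n-1$ into positive parts, and since a fixed composition $(\lambda_1,\dots,\lambda_\ell)$ arises as the ordered block-size sequence of exactly $(n-1)!/\prod\lambda_i!$ ordered set partitions of $[n-1]$, the $1/\ell!$ coming from the Lagrange expansion lets the sum be rewritten as one over unordered set partitions $\lambda \vdash [n-1]$. Clearing the resulting multinomial and Pochhammer factors reduces the expression to the formula stated in the theorem. The main obstacle is the bookkeeping in this last step, in which the factors $(n-1)!$, $\ell!$, and $\prod m_i!$ must be tracked precisely; the individual identities used are elementary, but the rearrangement must be done with care.
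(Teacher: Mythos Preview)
Your argument is correct, but it takes a different route from the paper's. Both proofs start from Theorem~\ref{thm:H_sum_G_nested} and the identification of spanning $\G_{\min}$-nested sets of $\K_n$ with laminar families on $[n]$; you encode these as labeled rooted trees with internal out-degrees $\geq 2$, whereas the paper (Proposition~\ref{prop:bijection flags and partitions}) encodes a spanning nested set of cardinality $m$ directly as a set partition of $[n-1+m]$ into $m$ parts of size $\geq 2$, giving an intermediate formula (Corollary~\ref{coro:rewriting}) as a sum over such partitions. The paper then converts that sum into the stated formula via an elementary counting identity (Lemma~\ref{lem:partitions}) comparing the number of partitions of type $(\sigma_1,\dots,\sigma_s)$ with those of type $(\sigma_1+1,\dots,\sigma_s+1)$. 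Your conversion instead passes through the exponential generating function and Lagrange inversion, yielding the factor $\binom{n+\ell-1}{\ell}$ (whose $1/\ell!$ lets ordered tuples collapse to set partitions) in place of the bijective Lemma~\ref{lem:partitions}. The paper's route is more elementary and produces Corollary~\ref{coro:rewriting} as a useful byproduct (it is reused in the proof of Theorem~\ref{thm:amn-formula}); your route has the advantage of tying the formula to the species-style functional equation for rooted trees, which is essentially the Getzler compositional identity recovered later in Proposition~\ref{prop:getzler-manin}. One cosmetic point: what you call a ``geometric series'' expansion of $(1-H(y)/y)^{-n}$ is really the negative-binomial expansion, and your bookkeeping caveat is apt---the factors $(n-1)!$, $\binom{n+\ell-1}{\ell}$, and $\prod (\lambda_i+1)!$ do combine correctly to give $\frac{(n-1+\ell)!}{(n-1)!}\prod\frac{1}{\lambda_i+1}$, but this should be written out explicitly.
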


From \Cref{thm:H_sum_G_nested}, we also deduce a new proof of a recent result by Aluffi, Marcolli, and Nascimento.

\begin{theorem}[{\cite[Theorem~1.1]{aluffi-marcolli-nascimento}}]\label{thm:amn-formula}
The Poincar\'e polynomial of $\overline{\mathcal{M}}_{0,n+1}$ satisfies
    \[P_{\overline{\mathcal{M}}_{0,n+1}}(x) = (1-x)^n \sum_{k\geq 0}\sum_{j\geq 0} s(k+n,k+n-j)\, S(k+n-j,k+1)\, x^{k+j},\]
where $s(n,k)$ and $S(n,k)$ denote, respectively, the signed Stirling numbers of the first kind and the Stirling numbers of the second kind.
\end{theorem}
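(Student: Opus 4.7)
The plan is to derive \Cref{thm:amn-formula} from \Cref{thm:new_Poinc_M_0n} via the exponential formula and Lagrange inversion. First, using the exponential formula on set partitions, I would rewrite \Cref{thm:new_Poinc_M_0n} as
\begin{equation}\label{eq:proposal-Pn-EGF}
P_{\overline{\mathcal{M}}_{0,n+1}}(x) = (n-1)!\,[t^{n-1}]\,(1 - g(t))^{-n},
\end{equation}
where $g(t) := \sum_{m \geq 1} \overline{\chi}_{m+1}(x)\,t^m/(m+1)!$; this uses the identity $\sum_k \binom{n+k-1}{k} y^k = (1-y)^{-n}$ to collapse the sum over partitions $\lambda$ with $\ell(\lambda) = k$. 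The classical EGF $\sum_{m \geq 1}\chi_{\Pi_m}(x)\,t^m/m! = ((1+t)^x - 1)/x$ for the partition lattice, combined with $\overline{\chi}_m(x) = \chi_{\Pi_m}(x)/(x-1)$, then gives the closed form $1 - g(t) = (tx^2 + 1 - (1+t)^x)/(tx(x-1))$.

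Next, I would rewrite the right-hand side of \Cref{thm:amn-formula} in matching form. Composing the standard exponential generating functions of the Stirling numbers,
\[
\sum_N s(N,M)\frac{t^N}{N!} = \frac{(\ln(1+t))^M}{M!}, \qquad \sum_N S(N,p)\frac{t^N}{N!} = \frac{(e^t - 1)^p}{p!},
\]
produces $\sum_N \bigl(\sum_M s(N,M)\,S(M,p)\,x^{N-M}\bigr)\,t^N/N! = w(t)^p/p!$ with $w(t) := (1 + xt)^{1/x} - 1$. Since $t = w\,\phi(w)$ for $\phi(w) := ((1+w)^x - 1)/(xw)$, Lagrange inversion applies and yields $[t^N]\,w^p = (p/N)\,[w^{N-p}]\,\phi(w)^{-N}$. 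The key observation is that for $N = n+k$ and $p = k+1$, the exponent $N - p = n-1$ is constant in $k$, so pulling $[w^{n-1}]$ outside and summing the geometric series in $k$ gives
\[
\sum_{k, j \geq 0} s(k+n, k+n-j)\,S(k+n-j, k+1)\,x^{k+j} = (n-1)!\,[w^{n-1}]\,(\phi(w) - x)^{-n}.
\]

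The proof is completed by the algebraic identity $\phi(w) - x = (1-x)(1 - g(w))$, which is immediate from the closed forms derived above. Multiplying by $(1-x)^n$ and comparing with \eqref{eq:proposal-Pn-EGF} yields the desired equality. The main technical obstacle is the careful bookkeeping of factorials through the Lagrange inversion step and the geometric-series summation; after that, the identification $\phi(w) - x = (1-x)(1-g(w))$ is a direct algebraic check, and the identification of both sides with $(n-1)!\,[w^{n-1}]\,(1 - g(w))^{-n}$ completes the proof.
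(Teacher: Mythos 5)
Your proposal is correct, and it takes a genuinely different route from the paper. The paper's proof starts from Corollary~\ref{coro:rewriting}, uses the palindromicity of $P_{\overline{\mathcal{M}}_{0,n+1}}(x)$ to pass to $\chi_{\sigma_i}(x^{-1})$, absorbs the singleton blocks by expanding $(1-x)^{-m}$, and then finishes with a bijective/combinatorial identity $\sum_j x^j s(a,a-j)S(a-j,b) = x^{a-b}\sum_{\lambda\vdash[a],\,\ell(\lambda)=b}\prod_i\chi_{\lambda_i}(x^{-1})$, proved by counting permutations together with a partition of their cycle sets. You instead start from Theorem~\ref{thm:new_Poinc_M_0n}, repackage it as the single coefficient extraction $P_{\overline{\mathcal{M}}_{0,n+1}}(x)=(n-1)!\,[t^{n-1}](1-g(t))^{-n}$ (which, incidentally, is exactly the Lagrange-inversion restatement of Getzler's compositional formula $H(G(t))=t$ in Proposition~\ref{prop:getzler-manin}, since $G(t)=t(1-g(t))$), handle the Stirling side entirely via composed EGFs to get $w(t)=(1+xt)^{1/x}-1$, and then apply Lagrange inversion with $t=w\phi(w)$, exploiting that $N-p=n-1$ is independent of $k$ so the geometric series collapses. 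The closing identity $\phi(w)-x=(1-x)(1-g(w))$ checks out: both sides equal $\bigl((1+w)^x-1-x^2w\bigr)/(xw)$. Your route buys a more uniform generating-function treatment and avoids the explicit bijection behind the Stirling identity, at the cost of one application of Lagrange inversion and some bookkeeping with $(k+n)!/(k+1)!\cdot(k+1)/(k+n)=(k+n-1)!/k!$; both approaches are comparable in length and the arguments are cleanly interchangeable.
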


Furthermore, from our formulas we can also recover generating functions and recurrences for $P_{\overline{\mathcal{M}}_{0,n+1}}(x)$ due to Manin and Getzler (see Proposition~\ref{prop:getzler-manin} and Proposition~\ref{prop:quadratic}).

\medskip
For the Chow ring of a matroid $\M$ with the maximal building set $\mathcal G_{\max}$, an open conjecture of Ferroni--Schr\"oter \cite{ferroni-schroter} posits that the Hilbert series $\H_\M^{\mathcal G_{\max}}(x)$ is real-rooted. However, for general building sets this property and the weaker property of $\gamma$-positivity do not hold.  We provide a negative answer to the analogous question for log-concavity.

\begin{theorem}\label{thm:noLC}
There exists a matroid $\M$ such that $\H_\M^{\mathcal G_{\min}}(x)$ has non-log-concave coefficients.  For any sufficiently large $n$, there exists a building set $\mathcal G$ on the Boolean matroid $\mathsf U_{n,n}$ such that $\H_{\mathsf U_{n,n}}^{\mathcal G}(x)$ has non-log-concave coefficients.
\end{theorem}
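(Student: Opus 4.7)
The plan is purely constructive: apply the recursions of Theorem \ref{thm:recursion formula-main}, or equivalently the nested-set formula of Theorem \ref{thm:H_sum_G_nested}, to compute $\H_\M^{\mathcal{G}}(x)$ on carefully chosen pairs $(\M,\mathcal{G})$, and exhibit ones whose coefficient sequences violate the log-concavity inequality $a_i^2 \geq a_{i-1}a_{i+1}$. The detailed data for the resulting examples can then be recorded in Section \ref{sec:eg}.

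For the first assertion, I would run a finite search over matroids of modest rank and ground set with $\mathcal{G}=\mathcal{G}_{\min}$. Theorem \ref{thm:H_sum_G_nested} makes this tractable: since $\mathcal{G}_{\min}$ consists of the irreducible flats, the spanning $\mathcal{G}_{\min}$-nested sets are comparatively few, and each contribution is a product of reduced characteristic polynomials over small intervals. Uniform matroids will not suffice, since $\H_{\mathsf U_{k,k}}^{\mathcal{G}_{\min}}(x)=1+x+\cdots+x^{k-1}$ is log-concave; the search should therefore target matroids with a richer lattice of flats, such as small graphic matroids or direct sums of small matroids. A candidate is then confirmed by writing out $\H_\M^{\mathcal{G}_{\min}}(x)$ explicitly and flagging three consecutive coefficients $a_{i-1},a_i,a_{i+1}$ satisfying $a_i^2 < a_{i-1}a_{i+1}$.

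For the second assertion, I would reduce to the first by a block-pullback construction on $\mathsf U_{n,n}$. Let $\M_0$ be a matroid produced in the first step. For $n$ large enough, partition $[n]$ into blocks indexed by the atoms of $\M_0$, and let $\mathcal{G} \subseteq 2^{[n]}$ consist of those unions of blocks corresponding to $\mathcal{G}_{\min}$-flats of $\M_0$, together with all singletons of $[n]$. In a Boolean lattice the factorability axiom reduces to requiring that the maximal $\mathcal{G}$-elements below any subset partition that subset, which holds by construction. Applying Theorem \ref{thm:H_sum_G_nested} to $(\mathsf U_{n,n},\mathcal{G})$ and to $(\M_0,\mathcal{G}_{\min})$ in parallel, and noting that $\overline{\chi}_{\mathsf U_{1,1}}(x)=1$, the sum for the Boolean side collapses term-by-term onto the sum for $\M_0$, yielding $\H_{\mathsf U_{n,n}}^{\mathcal{G}}(x) = \H_{\M_0}^{\mathcal{G}_{\min}}(x)$. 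Non-log-concavity is thereby inherited.

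The main obstacle is the verification in the second step: the Boolean lattice contains many subsets that are not pullbacks of $\M_0$-flats, and one must check that these produce no extra contributions in the nested-set sum. The key input is that the disjointness of maximal $\mathcal{G}$-elements below any subset rigidifies the nested-set structure, forcing every spanning $\mathcal{G}$-nested set of $\mathsf U_{n,n}$ to be the pullback of a spanning $\mathcal{G}_{\min}$-nested set of $\M_0$. Establishing this bijection cleanly is the technical crux; once it is in hand, the claim follows immediately from the first step.
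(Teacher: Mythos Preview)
Your plan has a genuine gap in both parts, but the fatal one is in the second assertion.

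\textbf{Second assertion.} The claimed identity $\H_{\mathsf U_{n,n}}^{\mathcal{G}}(x) = \H_{\M_0}^{\mathcal{G}_{\min}}(x)$ is false on the nose. The degree of $\H_\M^{\mathcal{G}}(x)$ is $\rk(\M)-1$, so the left side has degree $n-1$ while the right side has degree $\rk(\M_0)-1$. These agree only if $\M_0$ is itself Boolean, in which case $\H_{\M_0}^{\mathcal{G}_{\min}}(x)=1+x+\cdots+x^{\rk(\M_0)-1}$ is log-concave and useless for your purpose. Concretely, in Theorem~\ref{thm:H_sum_G_nested} the factor $\overline{\chi}_{\M|_F/\sup(S,F)}(x)$ depends on the rank function of the ambient polymatroid; for a block $B$ pulled back from a connected flat $F$ of $\M_0$, the Boolean side contributes $\overline{\chi}_{\mathsf U_{|B|,|B|}}(x)=(x-1)^{|B|-1}$, not $\overline{\chi}_{\M_0|_F}(x)$. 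Your remark that $\overline{\chi}_{\mathsf U_{1,1}}(x)=1$ handles only the singletons; every nontrivial pullback block contributes a polynomial of the wrong degree and shape. So the ``term-by-term collapse'' does not occur.

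\textbf{First assertion.} ``Run a finite search over matroids of modest rank'' is a strategy, not a proof, and there is no evidence it terminates in a reasonable range: the paper's example is a rank-$9$ matroid on $424$ elements, well outside any naive enumeration. Direct sums, which you single out, cannot help either: with the minimal building set the Hilbert series is multiplicative over direct summands, and products of log-concave palindromic sequences remain log-concave.

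The paper's approach is entirely different and worth internalizing. The guiding observation is that if $(a,b,c)$ is log-concave but not concave (i.e.\ $2b<a+c$), then $(a+\ell,b+\ell,c+\ell)$ fails log-concavity for large $\ell$. One then engineers a building set so that the wonderful variety is a projective space blown up first along a complete flag (producing binomial-coefficient Betti numbers, which are log-concave but not concave) and then at many isolated points (which raises all interior Betti numbers by a constant). For the minimal-building-set statement this is realized by an explicit matroid built via iterated free and principal extensions; for the Boolean statement one writes down directly a building set on $\mathsf U_{n,n}$ consisting of a flag of subsets together with many corank-one subsets. Theorem~\ref{thm:onestep} (conversion between building sets) makes the effect of each added point transparent without any search.
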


The claimed examples are provided in Section~\ref{sec:eg}.  For the construction of these examples, it is useful to note the underlying geometry (in the realizable case) behind the formulas in Theorem~\ref{thm:recursion formula-main}.  This underlying geometry of sequential blow-ups is explained in Section~\ref{sec:geom}.
For those wishing to avoid the geometry of blow-ups, the combinatorial counterpart (which holds regardless of realizability) is provided by Theorem~\ref{thm:onestep}, which relates the Hilbert series of Chow rings of different building sets.

\medskip

\section*{Acknowledgments}

We are grateful to the organizers of the Oberwolfach workshop ``Arrangements, matroids, and logarithmic vector fields''---this project was initiated at that MFO workshop. The authors also thank Emil Verkama for insightful discussions.  We are also grateful to the anonymous referee for many helpful comments and suggestions.
Chris Eur was partially supported by the National Science Foundation grant DMS-2246518.  Luis Ferroni was a member at the Institute for Advanced Study, funded by the Minerva Research Foundation. Jacob Matherne received support from NSF Grant DMS-2452179 and Simons Foundation Travel Support for Mathematicians Award MPS-TSM-00007970.  Roberto Pagaria is partially supported by PRIN 2022 “ALgebraic and TOPological combinatorics (ALTOP)” CUP J53D23003660006 and by INdAM - GNSAGA Project CUP E53C23001670001.

\section{Preliminaries}

We collect preliminary facts about building sets and their Chow rings.

\subsection{Building sets}\label{sec:building-sets}

Let us consider a loopless polymatroid $\M$, and assume that we have some distinguished subset of flats $\mathcal{G}\subseteq \mathcal{L}(\M)$. We will write
    \[ \mathcal{G}_{\leq F} := \{ G\in \mathcal{G} : G\subseteq F\},\]
for each $F\in \mathcal{L}(\M)$. 
We define
   \[
f(\mathcal G) := \text{the set of maximal elements in the subset $\mathcal G$ of the poset $\mathcal L(\M)$}.
\]
The set $f(\mathcal{G})$ will be referred to as \emph{the set of $\mathcal{G}$-factors}.

\begin{definition}\label{def:building-set}
Let $\M$ be a loopless polymatroid, and let $\mathcal{L}(\M)$ be its lattice of flats. A subset $\mathcal{G}\subseteq \mathcal{L}(\M)\setminus\{\varnothing\}$ is said to be a
%(geometric)
\textit{building set} if for any $F \in \mathcal{L}(\M)$ the morphism of lattices
    \[
      \varphi_F \colon  \prod_{G \in f(\mathcal{G}_{\leq F})}  [\varnothing,G]  \to  [\varnothing,F]
    \] 
induced by the inclusions is an isomorphism, and the following equality holds:
    \[\rk(F) = \sum_{G \in f(\mathcal{G}_{\leq F})} \rk(G).\]
    
\end{definition}

\begin{remark}
    We emphasize that we do \emph{not} require the ground set $E$ to be part of a building set, in contrast to some other works.  Our definition of a building set coincides with the original definition by De Concini and Procesi \cite{deconcini-procesi}.
\end{remark}

There are two distinguished building sets. The first, called the \emph{maximal building set}, consists of all nonempty flats, i.e.,
    \[ \mathcal{G}_{\max} := \mathcal{L}(\M) \setminus \{\varnothing\}.\]
The second, called the \emph{minimal building set}, consists of all nonempty connected flats, i.e.,
    \[ \mathcal{G}_{\min} := \{F\in \mathcal{L}(\M)\setminus\{\varnothing\} : \M|_F \text{ is connected}\}.\]
Recall here that a polymatroid $\M$ is \emph{connected} if the lattice $\mathcal{L}(\M)$ cannot be nontrivially decomposed as a Cartesian product.

\medskip
For a flat $F$ of a polymatroid $\M = (E,\operatorname{rk})$, the \emph{restriction} $\M|_F$ is the polymatroid $(F, \operatorname{rk}|_F)$, and the \emph{contraction} $\M/F$ is the polymatroid $(E\setminus F, \rk')$ where $\rk'(A) := \rk(A\cup F) - \rk(F)$.
The lattices of flats of $\M|_F$ and $\M/F$ are isomorphic to the intervals $[\emptyset, F]$ and $[F,E]$ of $\mathcal{L}(\M)$, respectively.
These operations extend to building sets, as follows.

\begin{definition}\label{def:inducedbuildingsets}
    Given a flat $F$ of $\M$ and a building set $\mathcal{G}$ of $\mathcal{L}(\M)$, we define the sets
    \[
    \mathcal{G}|_{F} := \{G \in \mathcal{G} : G \subseteq F \}\subseteq \mathcal{L}(\M|_F)
    \] and
    \[\mathcal{G}/F := \{ G \vee F : G \in \mathcal{G}\text{ and } G\not\subseteq F\}\subseteq \mathcal{L}(\M/F),
    \]
    where $\vee$ stands for the join operation of a lattice.
\end{definition}

It can be proved that $\mathcal{G}|_F$ is a building set on $\mathcal{L}(\M|_F)$ and $\mathcal{G}/F$ is a building set on $\mathcal{L}(\M/F)$. Note that $\mathcal{G}|_F = \mathcal{G}_{\leq F}$; we will use the symbol $\mathcal{G}|_F$ when we want to emphasize that we are considering a building set on the restricted polymatroid.

\begin{remark}
    If $\mathcal{G} = \mathcal{G}_{\max}$ is the maximal building set for $\M$, then $\mathcal{G}|_F$ and $\mathcal{G}/F$ are maximal building sets of the restriction and contraction respectively. We caution, however, that when $\mathcal{G} = \mathcal{G}_{\min}$ is the minimal building set, $\mathcal{G}/F$ may \emph{not} be the minimal building set in the contraction, since contracting a flat may not preserve connectivity. 

\end{remark}

\subsection{Nested sets and Chow rings}

The choice of a building set $\mathcal{G}$ on the lattice of flats $\mathcal{L}(\M)$ of a polymatroid $\M$ induces a special class of sets called $\mathcal{G}$-nested sets. They are defined as follows.

\begin{definition}\label{defn:nested}
    A subset $S\subseteq \mathcal{G}$ is said to be $\mathcal{G}$-\emph{nested} if, for each subset of incomparable flats $\{F_1,\ldots,F_m\}\subseteq S$ of cardinality at least $2$, the join $F_1\vee \cdots \vee F_m$ does not lie in $\mathcal{G}$.  
\end{definition}

Clearly, if $S$ is $\mathcal{G}$-nested, then any subset of $S$ is $\mathcal{G}$-nested. In particular, the family of all $\mathcal{G}$-nested sets forms a simplicial complex that we will denote $\mathcal{N}(\M,\mathcal{G})$ and call the \emph{nested set complex} associated to $\mathcal{G}$. When $\mathcal{G}=\mathcal{G}_{\max}$, the nested set complex coincides with the order complex of $\mathcal{L}(\M)\setminus\{\varnothing\}$. 

\medskip
The central objects of study in this paper are the Chow rings arising from a polymatroid and a building set. These rings (in a slightly more general set up) were introduced by Feichtner and Yuzvinsky in \cite{feichtner-yuzvinsky}, and later generalized to polymatroids by Pagaria and Pezzoli \cite{pagaria-pezzoli}. The following definition can be found in \cite[Section~4]{pagaria-pezzoli}.

\begin{definition}\label{def:chowring}
    Let $\mathcal{G}$ be a building set on $\mathcal{L}(\M)$. We define the \emph{Chow ring} $D(\M,\mathcal{G})$ as the graded quotient algebra
    \[ D(\mathcal{L}(\M),\mathcal{G}) = \mathbb{Q}[x_G : G\in \mathcal{G}]\bigg{/}I,\]
    where $I$ is the ideal generated by the relations
    \[ \left(\prod_{i=1}^m x_{G_i}\right)\left(\sum_{\substack{H\in \G\\H\geq G}} x_H\right)^{b} \quad \text{for each subset $S=\{G_1,\ldots,G_m\}\subseteq \mathcal{G}$ and $G\in \G$,}\]
    where $b:= \rk(G) - \rk(\sup(S,G))$ and $\sup(S,G):=\displaystyle\bigvee_{\substack{G'\in S\\ G'< G}} G'$.
\end{definition}

The rings $D(\M,\mathcal{G})$ are graded and Artinian, i.e., they are finite dimensional as $\mathbb{Q}$-vector spaces, and under the decomposition $D(\M,\mathcal{G}) = \bigoplus_{i=0}^{\rk(E)-1} D^i(\M,\mathcal{G})$ given by the grading, we have that $D^i(\M,\mathcal{G})=0$ for all $i\geq \rk(E)$. Recall our notation  for the Hilbert series:
    \[ \H_{\M}^{\mathcal{G}}(x) := \sum_{i=0}^{\rk(E) - 1} \dim\left(D^i(\M,\mathcal{G})\right)\, x^i .\]

One of the main results by Pagaria--Pezzoli constructs a Gr\"obner basis for the ideal $I$ in Definition~\ref{def:chowring}. In particular, the rings $D(\M,\mathcal{G})$ possess an additive basis given by a specific list of monomials. Since this is useful for computing the Hilbert series, we restate it here.

\begin{theorem}[{\cite[Corollary~2.8]{pagaria-pezzoli}}]\label{thm:fy-monomials}
    For every polymatroid $\M$ and building set $\mathcal{G}$, the following monomials constitute a basis of $D(\M,\mathcal{G})$ as a $\mathbb{Q}$-vector space:
        \[ x_{F_1}^{e_1} x_{F_2}^{e_2} \cdots x_{F_m}^{e_m},\]
    where $S:=\{F_1,\ldots,F_m\}$ is $\mathcal{G}$-nested, and
    \[
    0\leq e_i < \rk(F_i) - \rk(\sup(S,F_i))\]
for each $1\leq i\leq m$, where $\sup(S,F):=\displaystyle\bigvee_{\substack{G\in S\\ G< F}} G$.
\end{theorem}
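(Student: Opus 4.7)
The plan is to realize the listed monomials as the standard monomials of a Gr\"obner basis for the defining ideal $I$ of $D(\M, \mathcal{G})$, following the strategy of \cite{feichtner-yuzvinsky}. I would first fix a linear extension $\prec$ of the inclusion order on $\mathcal{G}$ (so that $G \subsetneq G'$ forces $G \prec G'$) and endow $\mathbb{Q}[x_G : G \in \mathcal{G}]$ with the induced degree reverse lexicographic order. The linear atom relations $\sum_{G \in \mathcal{G},\, G \supseteq \{e\}} x_G = 0$ then act as substitution rules, each expressing a distinguished variable in terms of variables indexed by strictly larger flats.

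The central step is to establish a family of power-vanishing relations of the precise shape suggested by the theorem: for every $\mathcal{G}$-nested set $S = \{F_1, \dotsc, F_m\}$ and every index $i$, writing $H_i := \sup(S, F_i)$, one has
\[
x_{F_i}^{\rk(F_i) - \rk(H_i)} \prod_{j \neq i} x_{F_j}^{e_j} \equiv 0 \pmod{I}
\qquad \text{for all } e_j \geq 0.
\]
I would prove this by induction on $\rk(F_i) - \rk(H_i)$, using the atom relation attached to an atom $e \in F_i \setminus H_i$ to replace one factor of $x_{F_i}$ at each step, and showing that the nested-set monomial relations annihilate every term whose newly introduced flat $G$ is either incomparable with some $F_j \in S$ or yields a non-nested configuration when added to $S$. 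Together with the square-free nestedness relations, these form the candidate Gr\"obner basis, and the standard monomials with respect to it are exactly the ones listed.

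Spanning then follows from the rewriting rules supplied by this Gr\"obner basis: a nestedness relation kills any monomial whose support is not $\mathcal{G}$-nested, and a power-vanishing relation rewrites any surviving monomial with an oversized exponent as a sum of monomials whose leading term is strictly smaller in $\prec$, so the procedure terminates at a $\mathbb{Q}$-linear combination of the claimed standard monomials. For linear independence, I would verify via Buchberger's criterion that all S-pairs among the nestedness and power-vanishing relations reduce to zero, whence the listed standard monomials descend to a $\mathbb{Q}$-basis of $D(\M, \mathcal{G}) = \mathbb{Q}[x_G]/I$.

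The main obstacle I anticipate is the sharpness of the exponent bound $\rk(F_i) - \rk(H_i)$. Deriving some power-vanishing relation with a large but non-sharp exponent is comparatively easy, but pinning down the exact bound requires careful combinatorial bookkeeping to guarantee that no further cancellation takes place among the surviving monomials. A cleaner alternative, essentially due to Pagaria and Pezzoli \cite{pagaria-pezzoli}, is to bypass Buchberger's criterion by constructing an explicit degree map on $D(\M, \mathcal{G})$ and proving non-vanishing of the corresponding top-degree monomial products, yielding a perfect pairing that forces linear independence of the listed basis directly.
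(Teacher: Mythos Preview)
The paper does not prove this theorem at all: it is stated with attribution to \cite[Corollary~1]{feichtner-yuzvinsky} and \cite[Corollary~2.8]{pagaria-pezzoli} and then used as a black box. There is therefore nothing in the paper to compare your proposal against.

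That said, your outline tracks the cited sources faithfully. The Gr\"obner basis route you describe, with a linear extension refining inclusion and degree reverse lexicographic order, is exactly the Feichtner--Yuzvinsky argument; the power-vanishing relations you aim for are their Gr\"obner basis elements, and the standard monomials are precisely the FY-monomials. Your closing remark about bypassing Buchberger via a degree map and Poincar\'e duality is the Pagaria--Pezzoli refinement. One small imprecision: in the Feichtner--Yuzvinsky Gr\"obner basis the generators are not literally the products $x_{F_i}^{\rk(F_i)-\rk(H_i)}\prod_{j\neq i} x_{F_j}^{e_j}$ for all $e_j\geq 0$, but rather specific polynomials of the form $\big(\prod_{F_j < F_i} x_{F_j}\big)\big(\sum_{G\geq F_i} x_G\big)^{\rk(F_i)-\rk(H_i)}$ whose leading terms are those monomials with $e_j=1$; the general exponent case follows once these are shown to lie in $I$. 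This does not affect the correctness of your plan, only the phrasing of which elements actually sit in the Gr\"obner basis.
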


The monomials appearing in the preceding statement will be customarily called the ``FY-monomials'' associated to $\M$ and $\mathcal{G}$. Notice that when $\mathcal{G}=\mathcal{G}_{\max}$, a nested set is just a flag of flats $S = \{F_1\subsetneq \cdots \subsetneq F_m\}$, and the rank condition in the theorem reads $0\leq e_i \leq \rk(F_i)-\rk(F_{i-1})-1$ for all $1\leq i\leq m$.

\section{Hilbert series for arbitrary building sets}\label{sec:main}

The main goal of this section is to extend \cite[Theorem~1.4]{ferroni-matherne-stevens-vecchi} to general building sets. To this end, we will work in the incidence algebra of the lattice of flats of a polymatroid. Specifically, if $\mathcal{L}(\M)$ is the lattice of flats of a polymatroid, the \emph{incidence algebra} $\mathcal{I}$ of $\mathcal{L}(\M)$ is the free $\mathbb{Z}[x]$-module spanned by all closed intervals of $\mathcal{L}(\M)$. The product of two elements $a,b\in \mathcal{I}$ is given by the convolution
    \[ (a\cdot b)_{FG} := \sum_{H \in [F,G]} a_{FH}\cdot b_{HG}, \]
for every choice of flats $F\subseteq G$, where  $a_{FG}$ denotes the polynomial that $a$ associates to the closed interval $[F,G]$. The two-sided identity in this algebra is the element $\delta\in \mathcal{I}$ defined by $\delta_{FG}:=1$ for $F=G$ and $0$ otherwise. Another very useful object that we will use repeatedly is the so-called zeta function $\zeta\in \mathcal{I}$, which is defined by $\zeta_{FG} := 1$ for every $F\subseteq G$. We will assume that the reader is familiar with the basics of incidence algebras, and we refer to \cite[Chapter~3]{stanley-ec1} for undefined terminology.

\subsection{\texorpdfstring{$\mathcal{G}$}{G}-reduced characteristic polynomials} We now introduce a generalization of the reduced characteristic polynomial, which takes into account general building sets.

\begin{definition}
    Let $\M$ be a polymatroid on $E$, and let $\mathcal{G}$ be a building set on $\mathcal{L}(\M)$. The \emph{$\mathcal{G}$-reduced characteristic polynomial} of $\M$ is defined as
    \[\overline{\chi}^{\mathcal{G}}_{\M}(x) := \begin{cases}
    -1 &\text{if $\M$ is the empty polymatroid, and}\\
    - \frac{\chi_{\M}(x)}{(1-x)^{|f(\mathcal{G})|}} &\text{otherwise}.
    \end{cases}
    \]
\end{definition}

If $\M$ is nonempty and $\mathcal{G} = \mathcal{G}_{\max}$, then $|f(\mathcal G)| = |\{E\}| = 1$, so the $\mathcal{G}_{\max}$-reduced characteristic polynomial agrees with the usual reduced characteristic polynomial $\overline{\chi}_{\M}(x) := \chi_{\M}(x) / (x-1)$. 
Since a building set $\mathcal{G}$ on $\mathcal{L}(\M)$ induces building sets on each interval $[F,G]\subseteq \mathcal{L}(\M)$, we can define the following key object in the incidence algebra of $\mathcal{L}(\M)$.

\begin{definition}
    Let $\mathcal{G}$ be a building set on $\mathcal{L}(\M)$. The \emph{$\mathcal{G}$-reduced characteristic function} on $\mathcal{L}(\M)$ is the incidence algebra element $\overline{\chi}^{\mathcal{G}}\in \mathcal{I}$ defined by
        \[ \overline{\chi}^{\mathcal{G}}_{FG} := \overline{\chi}^{\mathcal{G}|_G/F}_{\M|_G/F}(x)\]
    for every pair of flats $F\subsetneq G$ of $\mathcal{L}(\M)$, and $\overline{\chi}^{\mathcal{G}}_{FG} := -1$ whenever $F=G$.
\end{definition}

The unusual choice for how to define the $\mathcal{G}$-reduced characteristic polynomial of a singleton interval is important, and it is motivated by a similar choice made in \cite{ferroni-matherne-stevens-vecchi} of defining the usual reduced characteristic polynomial of an empty matroid as $-1$. Before proving our main results, we need a further ingredient.

\begin{definition}
    Let $\M$ be a polymatroid on $E$. The \emph{$\alpha$-polynomial} associated to a building set $\mathcal{G}$ on $\mathcal{L}(\M)$ is defined as follows:
    \[ \alpha_{\M}^{\mathcal{G}}(x) := (-1)^{|f(\mathcal{G})|-1} \prod_{F\in f(\mathcal{G})} (x + x^2 + \cdots + x^{\rk(F) - 1}).\]
    The \emph{$\alpha$-function} is the element $\alpha^{\mathcal{G}}$ in the incidence algebra $\mathcal{I}$ defined by
    \[ \alpha^{\mathcal{G}}_{FG} := \alpha_{\M|_G/F}^{\mathcal{G}|_G/F}(x)\]
    for every pair of flats $F\subsetneq G$, and $\alpha_{FG} := -1$ whenever $F=G$.
\end{definition}

The following lemma will play an important role in the proof of the main theorems in this section.

\begin{lemma}
    We have the following identity in the incidence algebra: \[\zeta \cdot \overline{\chi}^{\mathcal{G}} = \alpha^{\mathcal{G}}.\]
\end{lemma}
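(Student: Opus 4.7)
The plan is to verify the identity entrywise in the incidence algebra. The case $F = G$ is immediate, since both $(\zeta \cdot \overline{\chi}^{\mathcal{G}})_{FF}$ and $\alpha^{\mathcal{G}}_{FF}$ equal $-1$ by definition. For $F \subsetneq G$, the interval $[F,G]$ in $\mathcal{L}(\M)$ is canonically identified with $\mathcal{L}(\M|_G/F)$ equipped with the induced building set $\mathcal{G}|_G/F$, reducing the problem to showing
\[
\sum_{H \in \mathcal{L}(\M)} \overline{\chi}^{\mathcal{G}/H}_{\M/H}(x) \;=\; \alpha^{\mathcal{G}}_{\M}(x).
\]
Adopting the conventions $\chi_{\varnothing}(x) = 1$ and $|f(\varnothing)| = 0$ so that the term $H = E$ fits in uniformly as $-1$, the left-hand side becomes $-\sum_{H} \chi_{\M/H}(x) / (1-x)^{|f(\mathcal{G}/H)|}$.

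Setting $\{F_1,\dots,F_k\} = f(\mathcal{G})$, the building set axiom at $E$ produces a rank-preserving lattice isomorphism $\prod_{i=1}^{k} \mathcal{L}(\M|_{F_i}) \xrightarrow{\sim} \mathcal{L}(\M)$ via the join, under which each $H$ corresponds to a tuple $(H_1,\dots,H_k)$ with $H_i \in \mathcal{L}(\M|_{F_i})$. The key step of the proof, and the one I expect to take most of the work, is to establish two factorizations derived from the building set axioms applied to the contractions: first, $f(\mathcal{G}/H) = \{H \vee F_i : F_i \not\leq H\}$, whence $|f(\mathcal{G}/H)| = \#\{i : H_i \neq F_i\}$; second, using multiplicativity of characteristic polynomials over products of ranked lattices together with the identification $(\M/H)|_{H \vee F_i} \cong \M|_{F_i}/H_i$ (which requires that ranks are additive under the product decomposition),
\[
\chi_{\M/H}(x) \;=\; \prod_{i\,:\,H_i \neq F_i} \chi_{\M|_{F_i}/H_i}(x).
\]

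With these in hand, the sum over $H$ reindexes as a product of sums, one per $\mathcal{G}$-factor:
\[
-\sum_{H} \frac{\chi_{\M/H}(x)}{(1-x)^{|f(\mathcal{G}/H)|}} \;=\; -\prod_{i=1}^{k} \left(1 + \sum_{H_i \subsetneq F_i} \frac{\chi_{\M|_{F_i}/H_i}(x)}{1-x}\right),
\]
where the ``$1$'' in each factor corresponds to the contribution $H_i = F_i$. The standard Möbius identity $\sum_{H \in \mathcal{L}(\N)} \chi_{\N/H}(x) = x^{\rk(\N)}$ then simplifies each inner factor to $1 + (x^{\rk F_i} - 1)/(1-x) = -(x + x^2 + \cdots + x^{\rk F_i - 1})$, and collecting the overall sign $-(-1)^{k} = (-1)^{k-1}$ delivers exactly $\alpha_{\M}^{\mathcal{G}}(x)$. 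The main obstacle is the rigorous verification of the two factorizations in the preceding paragraph, which is a careful but routine bookkeeping exercise using only the building set axioms on contractions and the multiplicativity of the characteristic polynomial over Cartesian products of finite ranked lattices.
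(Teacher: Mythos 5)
Your proposal is correct and follows essentially the same route as the paper: both verify the identity entrywise, decompose the sum over flats into a product over the $\mathcal{G}$-factors using the building set axiom, and evaluate each inner factor to a geometric series. The only cosmetic differences are that you reduce to the interval $[\varnothing,E]$ at the outset and derive the inner evaluation directly from the Möbius-function identity $\sum_{H}\chi_{\N/H}(x)=x^{\rk\N}$, whereas the paper works with general intervals $[F,G]$ and cites \cite[Lemma~2.5]{ferroni-matherne-stevens-vecchi} for that step.
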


\begin{proof}
    %NEW PROOF
    For the sake of simplicity, and without loss of generality, we will prove that $\left(\zeta \cdot \overline{\chi}^{\mathcal{G}}\right)_{\varnothing E} = \alpha^{\mathcal{G}}_{\varnothing E}$. The general case follows from this one. Let us set $f(\G) =\{ G_1, G_2, \dots, G_k\}$. We have the following chain of equalities:
    \begin{align*} \left(\zeta\cdot\overline{\chi}^{\mathcal{G}}\right)_{\varnothing E} &= -\sum_{H\in \mathcal{L}(\M)} \frac{\chi_{HE}(x)}{(1-x)^{\lvert f(\G/H) \rvert}} \\
    &= -\sum_{F_i \in [\varnothing,G_i]} \prod_{\substack{j=1\\F_j\neq G_j}}^k \frac{\chi_{F_jG_j}(x)}{1-x}\\
    &= - \sum_{F_i \in [\varnothing,G_i]} \prod_{j=1}^k \left(-\overline{\chi}_{F_jG_j}(x)\right)\\
    &= (-1)^{k-1}\sum_{F_i\in [\varnothing,G_i]} \prod_{j=1}^k \overline{\chi}_{F_jG_j}(x)\\
    &= (-1)^{k-1} \prod_{j=1}^k  \sum_{F_j\in [\varnothing,G_j]} \overline{\chi}_{F_jG_j}(x)\\
    &= (-1)^{k-1} \prod_{j=1}^k (x + x^2 + \cdots + x^{\rk(G_j)-1}) = \alpha^{\mathcal{G}}_{\varnothing E}.
    \end{align*}
    The sum in the second line is over all choices of flats $(F_1,\ldots, F_k) \in [\varnothing,G_1]\times \cdots \times [\varnothing,G_k]$. The equality in the fifth line is obtained by exchanging the order of the sums and the products. Finally, in the last line we used \cite[Lemma~2.5]{ferroni-matherne-stevens-vecchi} and the fact that $k = |f(\G)|$.
\end{proof}

\subsection{Recursions for Hilbert series of Chow rings}

\begin{definition}
    Let $\mathcal{G}$ be a building set on $\mathcal{L}(\M)$. The \emph{$\mathcal{G}$-Chow polynomial} of a nonempty polymatroid $\M$ is defined as the Hilbert series $\H^\mathcal{G}_\M(x)$ of $D(\M,\mathcal{G})$. The \emph{$\mathcal{G}$-Chow function} is the element $\H^{\mathcal{G}}$ in the incidence algebra $\mathcal{I}$ of $\mathcal{L}(\M)$ defined by
    \[ \H^{\mathcal{G}}_{FG} := \H_{\M|_G/F}^{\mathcal{G}|_G/F}(x),\]
    for every pair of flats $F\subsetneq G$, and $\H_{FG}^{\mathcal{G}}:=1$ whenever $F = G$.
\end{definition}

The following is the crucial result enabling the computation of Hilbert series of Chow rings using arbitrary building sets.

\begin{theorem}\label{thm:g-chow-and-g-reduced}
    Let $\mathcal{G}$ be a building set on $\mathcal{L}(\M)$. The $\mathcal{G}$-Chow function and the $\mathcal{G}$-reduced characteristic function are inverses of each other in $\mathcal{I}$ up to a sign. That is,
    \[ \H^{\mathcal{G}} = -\left(\overline{\chi}^{\mathcal{G}}\right)^{-1}.\]
\end{theorem}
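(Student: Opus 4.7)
My plan is to prove the equivalent form $\overline{\chi}^{\G} \cdot \H^{\G} = -\delta$ in the incidence algebra $\mathcal{I}$, by induction on the rank of $\M$. The base case $\rk(\M) = 0$ is trivial. For the inductive step, the $(F, G)$-entry of $\overline{\chi}^{\G} \cdot \H^{\G}$ depends only on the polymatroid $\M|_G/F$, whose rank is strictly less than $\rk(\M)$ whenever $(F, G) \neq (\varnothing, E)$; the inductive hypothesis thus handles all entries except the $(\varnothing, E)$-entry. Unpacking the latter and separating out the diagonal contributions, it remains to establish the recursion
\[
\H^{\G}_\M(x) = \sum_{\varnothing \neq H \in \mathcal{L}(\M)} \frac{-\chi_{\M|_H}(x)}{(1-x)^{|f(\G|_H)|}} \cdot \H^{\G/H}_{\M/H}(x),
\]
which is recursion (\ref{eq:recursion-main}) of Theorem~\ref{thm:recursion formula-main}.

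To prove this recursion, I would use the FY monomial basis (Theorem~\ref{thm:fy-monomials}) combined with the preceding lemma. Using $\zeta \cdot \overline{\chi}^{\G} = \alpha^{\G}$ to absorb the characteristic polynomials, the recursion is equivalent to the cleaner form
\[
\H^{\G}_\M(x) = 1 + \sum_{\varnothing \neq H} \alpha^{\G|_H}_{\M|_H}(x) \cdot \H^{\G/H}_{\M/H}(x).
\]
The FY basis expresses $\H^{\G}_\M(x)$ as a sum over $\G$-nested sets $S$ of $\prod_{F \in S}\bigl(x + \cdots + x^{\rk(F) - \rk(\sup(S,F))-1}\bigr)$. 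A key structural fact is that, for any $\G$-nested set $S$ with $\bigvee S = H$, the maximal elements of $S$ coincide with $f(\G|_H)$; this follows from the building-set factorization $[\varnothing, H] \cong \prod_{G \in f(\G|_H)}[\varnothing, G]$ together with the nestedness condition. Each FY monomial then decomposes multiplicatively along the factors of $\G|_H$, which I would use to match the recursion's right-hand side.

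The main obstacle is reconciling this FY-basis decomposition---which naturally produces products of ``spanning'' series associated with the \emph{restrictions} $\M|_G$ for $G \in f(\G|_H)$---with the recursion's right-hand side, which instead involves the \emph{contracted} Hilbert series $\H^{\G/H}_{\M/H}$ and carries an alternating sign $(-1)^{|f(\G|_H)|-1}$ coming from $\alpha^{\G|_H}_{\M|_H}$. Bridging this discrepancy appears to require a sign-reversing involution on augmented FY data (for example, pairs consisting of a flat and an FY monomial in a sub-contraction), or equivalently an inclusion-exclusion in $\mathcal{I}$ that iterates the preceding lemma.
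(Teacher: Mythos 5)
Your setup---induct on rank, reduce to the $(\varnothing,E)$-entry, and pass to $\alpha^{\mathcal G}$ via the lemma $\zeta \cdot \overline{\chi}^{\mathcal G} = \alpha^{\mathcal G}$---is exactly the strategy of the paper's proof. But the step you label the ``main obstacle'' is not a technicality to be smoothed out; it \emph{is} the proof, and you have identified \emph{that} some inclusion-exclusion on FY data is needed without saying \emph{which} one. Moreover, your proposed organizing principle points the wrong way: grouping FY-monomials by $H=\bigvee S$ and noting $\max(S)=f(\mathcal G|_H)$ decomposes the monomial as a product over the \emph{restrictions} $\M|_{G_i}$, which is the route to Theorem~\ref{thm:H_sum_G_nested}, not to the contraction terms $\H_{\M/H}^{\mathcal G/H}$ that appear in the recursion.

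The inclusion-exclusion that actually works groups by \emph{minimal} elements of the nested set. For each flat $G$, the paper defines an injection $\varphi_G\colon A(G)\times\operatorname{FY}(\M/G,\mathcal G/G)\to\operatorname{FY}(\M,\mathcal G)$, where $A(G)$ records exponents in $[1,\rk(\cdot)-1]$ on $f(\mathcal G|_G)$ (its generating function is $\pm\alpha_{\varnothing G}^{\mathcal G}$) and the second factor is transplanted via a bijection $\psi$ to become the part of the FY-monomial sitting above $G$. The image of $\varphi_G$ consists precisely of FY-monomials $m(N,k)$ with $f(\mathcal G|_G)\subseteq\min(N)$, and a given $m(N,k)$ lies in $\operatorname{Im}\varphi_G$ exactly for $G=\bigvee T$ with $T\subseteq\min(N)$, where moreover $f(\mathcal G|_{\bigvee T})=T$ by nestedness. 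Hence
\[
(\alpha^{\mathcal G}\cdot\H^{\mathcal G})_{\varnothing E}=\sum_{m(N,k)}\biggl(\sum_{T\subseteq\min(N)}(-1)^{|T|-1}\biggr)x^{\deg m(N,k)}=-1,
\]
the inner alternating sum vanishing unless $N=\varnothing$. Your instinct that the augmented data should be ``a flat $G$ together with an FY-monomial of $\M/G$'' is correct; the missing idea is that $G$ must range over joins of \emph{subsets of the minimal elements} of a nested set, not over the join of the whole nested set.
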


\begin{proof}

First, let us show that $\alpha^{\mathcal{G}}\cdot \H^{\mathcal{G}} = -\zeta$. For the sake of simplicity, and without loss of generality, we will prove that $(\alpha^\mathcal{G} \cdot \H^\mathcal{G})_{\varnothing E} = -\zeta_{\varnothing E}$. The case of more general intervals follows from this one.

For each $G\in \cL(\M)$, consider the set $A(G)$ of functions $h \colon f(\G|_G) \to \mathbb{N}$ such that $h(F) \in [1, \rk(F)-1]$.
The function $\alpha_{\varnothing G}^{\G|_{G}}$ is (up to a sign) the generating function of $A(G)$ with respect to the total degree of the monomial $\prod_{F \in f(\G|_G)} x_F^{h(F)}$.
The left hand side of the identity we claim is 
\[ \sum_{G \in \mathcal{L}(\M)} \alpha_{\varnothing G} ^{\mathcal{G}} \, \H_{GE} ^{\G},\]
and it counts (without considering the signs) the possible choices of an element of $A(G)$ and of an FY-monomial in $[G,E]$.
For each $F \not \subseteq G$ define $\psi(F) \coloneqq \sup \left( f(\mathcal{G}|_{F}) \setminus f(\mathcal{G}|_{G}) \right)$ and extend $\psi$ to a map $N \mapsto \{\psi (F) \mid  F \in N\}$. 
It is a straightforward check that this map is a bijection between $\G/G$-nested sets and $\G$-nested sets with no elements smaller or equal to $G$. %containing $f(\G|_G)$ and no element below.
Let $\operatorname{FY}(\M, \mathcal{G})$ be the set of FY-monomials associated to $\M$ and $\G$, and define an injection (cf. \cite[Lemma~4.14]{pagaria-pezzoli})
\[\varphi_G \colon A(G) \times \operatorname{FY}(\M/G, \G/G)  \to \operatorname{FY}(\M, \G)\]
 by 
\[\varphi_G\left(h, \prod_{F \in N}x_F^{k(F)}\right) = \prod_{F \in f(\G|_G)} x_F^{h(F)} \prod_{F \in N} x_{\psi(F)}^{k(F)}.\]
The range of $\varphi_G$ is the set of all FY-monomials associated to a nested set $N$ such that $f(\G|_G) \subseteq \min (N)$.
%Let $\varphi$ be the surjective map
%\[ \varphi \colon  \sqcup_{G \in \cL} A(G) \times FY([G, \hat{1}], \G/G)  \to FY(\cL, \G).\]
The sets $\operatorname{Im} \varphi_G$ cover $\operatorname{FY}(\M, \G)$ and an FY-monomial $m(N,k)=\prod_{F \in N} x_F^{k(F)}$ is contained in $2^{\lvert \min (N) \rvert}$ sets corresponding to the element $\sup T$ for any $T \subseteq \min(N)$.
Therefore,
\[ (\alpha\cdot \H)_{\varnothing E}^\G = \sum_{m(N,k) \in \operatorname{FY}(\M, \G)} \left( \sum_{T \subseteq \min(N)} (-1)^{\lvert T \rvert -1} \right) x^{\deg(m(N,k))} = -1.\]
This completes the proof.
\end{proof}

\subsection{Explicit formulas for the Hilbert series}

Using Theorem~\ref{thm:g-chow-and-g-reduced}, we can deduce the following recursions for computing Hilbert series of Chow rings of polymatroids with arbitrary building sets, which is one of the main results of the present paper.

\newtheorem*{thm:intro1}{Theorem~\ref{thm:recursion formula-main}}
\begin{thm:intro1}
\label{thm:recursion formula}
\itshape For every nonempty polymatroid $\M$ and every building set $\mathcal{G}$, the following two recursions hold:
\begin{equation}\label{eq:recursion}
\H_\M^\mathcal{G}(x) = \sum_{\emptyset \neq F \in \mathcal L(\M)} \frac{-\chi_{\M|_F}(x)}{(1-x)^{|f(\mathcal{G}|_F)|}} \cdot \H_{\M/F}^{\mathcal{G}/F}(x),
\end{equation}
and
\begin{equation}\label{eq:dualrecursion}
\H_\M^{\mathcal{G}}(x) = \sum_{E\neq F \in \mathcal L(\M)} \H_{\M|_F}^{\mathcal{G}|_F}(x) \cdot \frac{-\chi_{\M/F}(x)}{(1-x)^{\lvert f(\G/F) \rvert}}.
\end{equation}
\end{thm:intro1}

\begin{proof}
    Because of how we defined the element $\H^{\mathcal{G}} \in \mathcal{I}$, the Hilbert series of the ring $D(\M,\mathcal{G})$ equals $\H^\G_{\varnothing E}$. By applying Theorem~\ref{thm:g-chow-and-g-reduced}, we have that $\overline{\chi}^{\mathcal{G}}\cdot \H^{\mathcal{G}}=- \delta$. In particular, since $E\neq \varnothing$, we have that
        \[ \left(\overline{\chi}^{\mathcal{G}}\cdot \H^{\mathcal{G}}\right)_{\varnothing E} = 0.\]
    Expanding, we obtain that
    \[ \sum_{F\in \mathcal{L}(\M)} \overline{\chi}^{\mathcal{G}}_{\varnothing F}\, \H^{\mathcal{G}}_{FE} = 0,\]
    and the identity follows from separating the summand corresponding to $F = \varnothing$.
    
    The proof of the other identity is completely analogous, but using instead that $\H^{\mathcal{G}}\cdot \overline{\chi}^{\mathcal{G}}=-\delta$ and isolating the summand corresponding to $F=E$.
\end{proof}

\begin{figure}[ht]
    \centering
    \begin{tikzpicture}
         \node (top) at (0,3) {$abc$};
         \node (bottom) at (0,0) {$\varnothing$};
         \node (a) at (-1,1) {$a$};
         \node (b) at (0,1) {$b$};
         \node (c) at (1,1.5) {$c$};
         \node (ab) at (-0.5,2) {$ab$};
         \draw (ab) -- (a) node [midway, left] {$\scriptstyle 2$};
         \draw (a) -- (bottom) node [midway, left] {$\scriptstyle 2$};
         \draw (bottom) -- (b) node [midway, left] {$\scriptstyle 2$};
         \draw (b) -- (ab) node [midway, right] {$\scriptstyle 2$};
         \draw (ab) -- (top) node [midway, left] {$\scriptstyle 1$};
         \draw (c) -- (bottom) node [midway, right] {$\scriptstyle 4$};
         \draw (c) -- (top) node [midway, right] {$\scriptstyle 1$};
    \end{tikzpicture}
    \caption{The lattice of flats of a polymatroid $\M$}
    \label{fig:Hasse}
\end{figure}
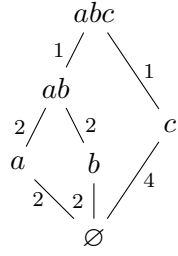

\begin{example}
Consider the loopless polymatroid $\M$ on $E=\{a,b,c\}$ described in \cite[Section~7]{pagaria-pezzoli}. Its lattice of flats is depicted in Figure \ref{fig:Hasse}.
Consider the building set $\mathcal{G}=\{a,b,c,abc\}$. One can enumerate the FY-monomials to see that 
\[
\H_\M^{\mathcal{G}}(x) = 1 + 4x + 5x^2 + 4x^3 + x^4.
\]
Let us show how to compute this polynomial using Theorem~\ref{thm:recursion formula-main}. We begin by computing $\H_{\M / F}^{\mathcal{G}/F}(x)$ in descending order of rank. Since for polymatroids of rank at most $1$ the Hilbert series of the Chow ring is identically $1$, we have that $\H_{\M / abc}^{\mathcal{G}/abc}(x) = \H_{\M / ab}^{\mathcal{G}/ab}(x) = \H_{\M / c}^{\mathcal{G}/c}(x) = 1$. We can now compute $\H_{\M / a}^{\mathcal{G}/a}(x)$ (and $\H_{\M / b}^{\mathcal{G}/b}(x)$ in an identical fashion) as

\begin{align*}
\H_{\M / a}^{\mathcal{G}/a}(x) &=  - \frac{\chi_{\M|_{ab} / a}(x)}{(1-x)^{|f( \mathcal{G}|_{ab}/a)|}} \, \H_{\M / ab}^{\mathcal{G}/ab}(x)  - \frac{\chi_{\M / a}(x)}{(1-x)^{|f(\mathcal{G}/a)|}} \, H_{\M / abc}^{\mathcal{G}/abc}(x) \\
&= - \frac{x^2-1}{1-x} - \frac{x^3-x}{1-x} \\
&= 1 + 2x + x^2.
\end{align*}
Lastly, we observe that $|f(\mathcal{G}|_{ab})| = 2$, so
\[
\frac{\chi_{\M|_{ab}}(x)}{(1-x)^{|f(\mathcal{G}|_{ab})|}} = \frac{(x^2-1)^2}{(1-x)^2} = (x+1)^2.\]
Therefore,
\begin{align*}
    \H_\M^\mathcal{G}(x) &= -\frac{\chi_{\M|_{a}}(x)}{1-x} \, \H_{\M / a}^{\mathcal{G}/a}(x) -\frac{\chi_{\M|_{b}}(x)}{1-x} \, \H_{\M / b}^{\mathcal{G}/b}(x) -\frac{\chi_{\M|_{c}}(x)}{1-x} \, \H_{\M / c}^{\mathcal{G}/c}(x) \\
    &\qquad- \frac{\chi_{\M|_{ab}}(x)}{(1-x)^2} \, \H_{\M / ab}^{\mathcal{G}/ab}(x) -\frac{\chi_{\M}(x)}{1-x} \\
    &= (x+1)(x+1)^2 + (x+1)(x+1)^2 + (x^3+x^2+x+1)\,\cdot 1 \\
    &\qquad - (x+1)^2\cdot 1 + (x^4+x^3 - x^2-x-1) \\
    &= 1 + 4x + 5x^2 + 4x^3 + x^4.
\end{align*}
\end{example}

We now rewrite Theorem~\ref{thm:g-chow-and-g-reduced} to obtain a non-recursive formula for $\H_{\M}^{\mathcal{G}}(x)$ as a sum over chains of flats.

\newtheorem*{cor:intro2}{Corollary~\ref{cor:chains}}
\begin{cor:intro2}
\itshape The Hilbert series of $D(\M,\mathcal{G})$ equals
\begin{align*}
    \H_\M^{\mathcal{G}}(x) &= \sum_{ m\geq 1}\sum_{\varnothing = F_0  \subsetneq \cdots \subsetneq F_{m} = E} (-1)^{m} \prod_{i=1}^{m} \frac{\chi_{\M|_{F_i}/F_{i-1}}(x)}{(1-x)^{|f(\mathcal{G}|_{F_i}/F_{i-1})|}}.
\end{align*}
\end{cor:intro2}

\begin{proof}
    Combining the identity in Theorem~\ref{thm:g-chow-and-g-reduced} with a general formula describing inverses in the incidence algebra as sums over chains (see \cite[Lemma~5.3]{ehrenborg-readdy}) gives the result.
\end{proof}

\begin{example}
    Consider $\M=\mathsf{K}_4$, the graphic matroid associated to the complete graph on $4$ vertices. As we explain at the beginning of Section~\ref{sec:braid}, flats of this matroid are in natural bijection with partitions of $\{1,2,3,4\}$. These are the possible chains in $\mathcal{L}(\M)$, where we denote flats by partitions of $[4]$:
    \begin{itemize}
        \item one flag of type $1|2|3|4 \subsetneq  1234$,
        \item 6 flags of type $1|2|3|4 \subsetneq ij|k|l \subsetneq 1234$,
        \item 4 flags of type $1|2|3|4 \subsetneq ijk|l \subsetneq 1234$,
        \item 3 flags of type $1|2|3|4 \subsetneq ij|kl \subsetneq 1234$,
        \item 12 flags of type $1|2|3|4 \subsetneq ij|k|l \subsetneq ijk|l \subsetneq 1234$,
        \item 6 flags of type $1|2|3|4 \subsetneq ij|k|l \subsetneq ij|kl \subsetneq 1234$.
        \end{itemize}
    Let us write $\chi_{m}(x) = (x-1)(x-2)\cdots(x-m+1)$ for the characteristic polynomial of $\mathsf{K}_m$. The formula in the preceding corollary states that
    \begin{align*}
    \H^{\mathcal{G}_{\min}}_{\K_4}(x) &= \overline{\chi}_4 + 6\overline{\chi}_2\overline{\chi}_3 + 4\overline{\chi}_3\overline{\chi}_2 -3 \overline{\chi}_2\overline{\chi}_2\overline{\chi}_2 + 12\overline{\chi}_2\overline{\chi}_2\overline{\chi}_2 + 6\overline{\chi}_2\overline{\chi}_2\overline{\chi}_2 \\
    &= (x-2)(x-3) + 10(x-2) + 15 \\ &= x^2 + 5x + 1. 
    \end{align*}
\end{example}

\subsection{A more efficient non-recursive formula}

Let $S$ be a $\mathcal{G}$-nested set. We recall from the notation in the statement of Theorem~\ref{thm:fy-monomials} that \[\sup(S,F):= \bigvee_{\substack{F_j\in S\\ F_j< F}} F_j.\] Following the terminology in \cite[Definition 2.12]{coron2}, we say that a $\mathcal{G}$-nested set $S$ is \emph{spanning} if the join of all the elements in $S$ is the ground set $E$ or, equivalently, if $S$ contains all the maximal elements of $\mathcal{G}$.
We have the following formula for computing $\H_{\M}^{\mathcal{G}}(x)$ in terms of spanning $\mathcal{G}$-nested sets. 

\newtheorem*{thm:intro4}{Theorem~\ref{thm:H_sum_G_nested}}
\begin{thm:intro4}
\itshape The Hilbert series of $D(\M,\mathcal{G})$ equals
\begin{equation}
    \label{eq:H_nested}
    \H_\M^{\mathcal{G}}(x) = \sum_{ S } \prod_{F\in S} \overline{\chi}_{\M|_{F}/\sup(S,F)}(x),
\end{equation}
where the sum runs over all spanning $\G$-nested sets.
\end{thm:intro4}

\begin{proof}
    We show this by induction on the rank of the polymatroid $\M$.
    We analyze two cases according to whether $E$ belongs to $\mathcal{G}$ or not. 
    
    If $E\in \G$, then, by the induction hypothesis applied to the recursion \eqref{eq:dualrecursion}, we have
    \begin{equation}
    \H_{\M}^{\mathcal{G}}(x) =
    \sum_{E\neq F\in \mathcal{L}(\M)} \left( \sum_{\substack{S' \text{ spanning }\\\G|_F \text{-nested}}} \prod_{G\in S'}\overline{\chi}_{\M|_{G}/\sup(S',G)} \right) \overline{\chi}^{\mathcal{G}/F}_{\M/F}(x).
    \end{equation}
    Since $E\in \mathcal{G}$, we have that $\overline{\chi}^{\mathcal{G}/F}_{\M/F}(x) = \overline{\chi}_{\M/F}(x)$. Notice that for each spanning $\mathcal{G}|_F$-nested set $S'$ appearing in the sum between parentheses, the set $S=S' \cup \{E\}$ is $\G$-nested and $F = \sup(S,E)$. Therefore, we can write $\overline{\chi}_{\M/F}(x) = \overline{\chi}_{\M|_E/\sup(S,E)}(x)$ and group it with the factors inside the sum, thus finishing the induction step in this case.
    
    If, instead, $E\not \in \G$ let $f(\mathcal{G})=\{F_1, \dots, F_k\}$ be the $\mathcal{G}$-factors. 
    There is a correspondence between $\G$-nested sets and the disjoint union of $\G|_{F_i}$-nested sets for $i=1, \dots k$; moreover, this correspondence extends to FY-monomials.
    Hence, the identities 
    \[ \H_\M^\G(x) = \prod_{j=1}^k \H_{\M|_{F_j}}^{\G|_{F_j}}(x) \]
    and 
    \[\sum_{\substack{S \text{ spanning }\\\G \text{-nested}}} \prod_{F\in S} \overline{\chi}_{\M|_{F}/\sup(S,F)}  = \prod_{j=1}^k \sum_{\substack{S \text{ spanning }\\\G|_{F_j} \text{-nested}}} \prod_{F\in S} \overline{\chi}_{\M|_{F}/\sup(S,F)}\]
    hold and allow us to conclude by using the induction hypothesis.
\end{proof}

\subsection{The geometry of the formulas}\label{sec:geom}

We explain the geometry behind our formulas for $\H_\M^{\mathcal G}(x)$.
For simplicity, let us explain the recursion~\eqref{eq:dualrecursion} under the assumption that $\M$ is a matroid and that $\mathcal G$ contains the ground set $E$.
This geometric picture will also be helpful in constructing examples in Section~\ref{sec:eg}.

\smallskip
Suppose the matroid $\M$ has a realization by a linear subspace $L \subseteq \mathbb{C}^E$.  That is, the rank function of $\M$ is given by 
\[
\operatorname{rk}(S) = \dim(\text{the image of $L$ under the projection $\CC^E \to \CC^S$}).
\]
We assume that $L$ is not contained in a coordinate hyperplane, i.e.\ that $\M$ is loopless.  
Denote by $\mathbb{P} \mathring L := \mathbb P L \cap \mathbb P((\mathbb C^*)^E)$ the projective hyperplane arrangement complement.
For $F$ a flat of $\M$, let $L_F := L \cap \{x_i = 0 : i \in F\}$ and $L^F := L/L_F$.  We consider $L_F$ and $L^F$ as subspaces of $\mathbb C^{E\setminus F}$ and $\mathbb C^F$ respectively, so that they define arrangement complements $\mathbb P \mathring L_F$ and $\mathbb P\mathring L^F$, with corresponding matroids $\M/F$ and $\M|_F$, respectively.  Note that $\mathbb P L_E = \emptyset$ and $\mathbb P L^E = \mathbb P L$.

\smallskip
Given a building set $\mathcal G$ containing $E$, let $\{G_1, \dotsc, G_m\}$ be a total order on $\mathcal G$ that refines the order on $\mathcal L(\M)$.
The \emph{wonderful variety} $\underline W_L^{\mathcal G}$ introduced in \cite{deconcini-procesi} is the sequential blow-up of (the strict transforms of) the $\mathbb P L_{G_i}$, that is,
\[
\underline W_L^{\mathcal G} := \operatorname{Bl}_{\widetilde{\mathbb PL}_{G_1}} \big( \dotsb (\operatorname{Bl}_{\widetilde{\mathbb P L}_{G_{m-1}}}(\operatorname{Bl}_{\mathbb PL_{G_m}} \mathbb PL))\dotsb \big).
\]
The cohomology ring of $\underline W_L^{\mathcal G}$ is isomorphic to  $D(\M,\mathcal G)$.

\smallskip
Let $P_X(q)$ denote the \emph{virtual Poincar\'e polynomial} of a variety $X$ over $\CC$, characterized by the following two properties (see, for instance, \cite[Section 4.5]{Ful93}):
(i) If $X$ is complete and smooth, then $P_X(q)=\sum_{i \geq 0} \dim H^i(X,\Q) q^i$; and (ii) if $X = \bigsqcup_j Y_j$ is a decomposition of $X$ into finitely many locally closed subsets $Y_j$, then $P_X = \sum_j P_{Y_j}$.
Note that $P_{\underline W_L^{\mathcal G}}(q)~=~\H_\M^{\mathcal G}(q^2)$.  Since $E\in \mathcal G$,  the recursion~\eqref{eq:dualrecursion} states
\[
\H_\M^{\mathcal G}(x) = \sum_{F \neq E} \H_{\M|_F}^{\mathcal G|_F}(x) \cdot \overline\chi_{\M/F}(x),
\]
which now follows from the following observations:
\begin{itemize}
\item The projection map $\pi\colon \underline W_L^{\mathcal G} \to \mathbb P L$ is a composition of blow-down maps, under which the strict transform of $\mathbb P L_F$ is $\underline W_{L^F}^{\mathcal G|_F} \times \underline W_{L_F}^{\mathcal G/F}$ \cite[Theorem 4.3]{deconcini-procesi}, so that $\pi^{-1}(\mathbb P \mathring L_F) \simeq \underline W_{L^F}^{\mathcal G|_F} \times \mathbb P \mathring L_F$.
\item We have a decomposition $\mathbb P L = \bigsqcup_{F\neq E} \mathbb P \mathring L_F$ into locally closed subvarieties, which gives a decomposition $\underline W_L^{\mathcal G} = \bigsqcup_{F\neq E} \pi^{-1}(\mathbb P\mathring L_F)$.
\item The virtual Poincar\'e polynomial $P_{\mathbb P \mathring L_F}(q)$ of $\mathbb P \mathring L_F$ is equal to the reduced characteristic polynomial $\overline\chi_{\M/F}(q^2)$ (see, for instance, \cite[Section~7.2]{Kat16}).
\end{itemize}

\section{Formulas for \texorpdfstring{$\overline{\mathcal{M}}_{0,n+1}$}{M0n+1}.}\label{sec:braid}

In this section we will apply our results to the case in which $\M = \mathsf{K}_n$ is the graphic matroid associated to the complete graph on $n$ vertices, and $\mathcal{G}=\mathcal{G}_{\min}$.
These matroids are often called \emph{braid matroids} since they admit realizations by the braid arrangements of type $A$.
In this realization, the wonderful variety as recalled in Section~\ref{sec:geom} is the moduli space $\overline{\mathcal{M}}_{0,n+1}$ of $(n+1)$-pointed stable rational curves \cite[Section 4.3]{deconcini-procesi}.
The Poincar\'e polynomial $P_{\overline{\mathcal{M}}_{0,n+1}}(x)$ of the cohomology ring of $\overline{\mathcal{M}}_{0,n+1}$ therefore satisfies
    \[ \H_{\mathsf{K}_n}^{\mathcal{G}_{\min}}(x) = P_{\overline{\mathcal{M}}_{0,n+1}}(x).\]
There have been several works on this Poincar\'e polynomial \cite{keel, manin, Getzler94,aluffi-marcolli-nascimento}.  We apply our formulas from Section~\ref{sec:main} to deduce a new formula for $P_{\overline{\mathcal{M}}_{0,n+1}}(x)$ and recover previous results about $P_{\overline{\mathcal{M}}_{0,n+1}}(x)$.

\subsection{A formula in terms of partitions}

Let us begin with an elementary but useful observation: the flats of $\mathsf{K}_n$ of rank $i$ are in bijection with the partitions of $[n] = \{1, \dotsc, n\}$ into $n-i$ nonempty subsets.
The bijection is given by assigning to the partition $S_1 \sqcup \dotsb \sqcup S_{n-i} = [n]$ the set of edges in the union $\mathsf{K}_{S_1} \sqcup \dotsb \sqcup \mathsf{K}_{S_{n-i}}$ of complete graphs.
Thus, the connected flats of $\mathsf{K}_n$ correspond to partitions of $[n]$ in which there is at most one part that is not a singleton.
The meet (resp.\ join) on the lattice of flats $\mathcal{L}(\mathsf{K}_n)$ can be interpreted as the coarsest common refinement (resp.\ finest common coarsening) of partitions of $[n]$.
For braid matroids, because a minor $\M|_G/F$ by flats $F$ and $G$ is again always a direct sum of smaller braid matroids (up to simplification of some parallel elements), one deduces that $\mathcal{G}_{\min}|_G/F$ is the minimal building set for the minor $\M|_G/F$.
From now on, we write  $\lambda\vdash [a]$ to denote a set partition $\lambda$ of the set $\{1,\ldots,a\}$. 

\smallskip
To apply Theorem~\ref{thm:H_sum_G_nested},  we need a description of (spanning) $\mathcal{G}_{\min}$-nested sets of $\mathsf{K}_n$; this is accomplished by the next proposition. 
This result has been rediscovered multiple times in the literature; see, e.g., Erd{\"o}s--Sz\'ekely \cite[Theorem~1]{erdos-szekely} or, for a more explicit explanation, \cite[Theorem~2.1]{gaiffi} by Gaiffi. We include a formulation more adequate for our purposes, and a proof for the sake of completeness.  

\begin{proposition}\label{prop:bijection flags and partitions}
    There is a bijection between the set of spanning $\mathcal{G}_{\min}$-nested sets of cardinality $m$ in $\mathsf{K}_n$ and the collection of set partitions of $[n+m-1]$ into $m$ parts, none of which has size one. Moreover, for each spanning $\mathcal{G}_{\min}$-nested set, the sizes of the parts of the partition are determined as follows: each $F\in S$ corresponds to a part of size $k$ where $\mathsf{K}_k$ is the simplification of the minor $\mathsf{K}_n|_F/\operatorname{sup}(F,S)$.
\end{proposition}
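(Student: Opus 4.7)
The plan is to reduce both sides of the bijection to combinatorial objects on $[n]$ and exhibit the correspondence via rooted trees. The main subtlety, ultimately bookkeeping, lies in canonically labeling the non-root internal tree-nodes by $\{n+1, \dots, n+m-1\}$.

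First, I would recall that the connected flats of $\mathsf{K}_n$ are in natural bijection with subsets $T \subseteq [n]$ of size $\geq 2$: to such $T$ one associates the flat $F_T$ consisting of all edges with both endpoints in $T$. For two incomparable $T_1, T_2$, the join $F_{T_1} \vee F_{T_2}$ is connected if and only if $T_1 \cap T_2 \neq \varnothing$. Consequently $S \subseteq \mathcal{G}_{\min}$ is $\mathcal{G}_{\min}$-nested if and only if the associated family $\mathcal{L}_S := \{T_F : F \in S\}$ is a \emph{laminar family} on $[n]$ (any two members are comparable or disjoint), and $S$ is spanning if and only if $[n] \in \mathcal{L}_S$, since $[n]$ is the unique maximal element of $\mathcal{G}_{\min}$ for $\mathsf{K}_n$.

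Second, I would encode such a laminar family $\mathcal{L}$ of size $m$ by the rooted tree $\mathcal{T}(\mathcal{L})$ on vertex set $\mathcal{L} \sqcup [n]$ with root $[n]$, where every non-root vertex has as parent the minimum element of $\mathcal{L}$ strictly containing it. A short case analysis shows every internal node $T \in \mathcal{L}$ has at least two children: if $T$ has $r$ laminar children (disjoint by laminarity) of total size $s$, then its child-count equals $r + (|T| - s)$, which is $\geq 2$ in each of the cases $r = 0$, $r = 1$, and $r \geq 2$ (using $|T| \geq 2$ and the proper containment of laminar children). Hence grouping the $n + m - 1$ non-root vertices of $\mathcal{T}(\mathcal{L})$ by common parent partitions them into $m$ blocks of size $\geq 2$, one per element of $\mathcal{L}$. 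Fixing once and for all a canonical linear order on $\mathcal{L} \setminus \{[n]\}$---for example, by the pair $(\min T, -|T|)$ with a fixed tie-breaker---and relabeling its elements by $n+1, \dots, n+m-1$ in that order produces a set partition of $[n+m-1]$.

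The size-statistic claim is then immediate: the block indexed by $F \in S$ has size equal to the child-count of $T_F$ in $\mathcal{T}(\mathcal{L})$, and this equals the number of vertices of the simplification of $\mathsf{K}_n|_F / \sup(F, S)$, since this simplification is precisely the complete graph on the set of children of $T_F$ in $\mathcal{T}(\mathcal{L})$. Finally, for bijectivity the inverse is obtained by reading the labeled tree off from the partition: each label $n+j$ indexes the block containing its children, which---together with the ``orphan'' block assigned to the root---determines the parent function, while the canonical labeling convention picks out the unique $\mathcal{L}$ producing a given partition. Alternatively, one can avoid the bookkeeping entirely by simply matching cardinalities via a short exponential-generating-function computation, as in \cite{erdos-szekely,gaiffi}, and then verifying the size assignment separately.
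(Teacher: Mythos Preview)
Your reduction of spanning $\mathcal{G}_{\min}$-nested sets to laminar families containing $[n]$, and thence to rooted trees with leaf set $[n]$ and $m$ internal nodes each of arity $\geq 2$, is correct and is the standard picture (this is essentially Gaiffi's viewpoint).  The child-count computation and the identification of the block size with the number of vertices in the simplification of $\mathsf{K}_n|_F/\sup(F,S)$ are also fine.

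The gap is in the bijectivity argument.  Your proposed inverse says ``each label $n+j$ indexes the block containing its children,'' but from the partition $\pi$ alone you only see which block \emph{contains} $n+j$ (its siblings), not which block \emph{is indexed by} $n+j$ (its children).  In general several distinct rooted trees on the labelled vertex set $[n+m-1]\cup\{\mathrm{root}\}$ yield the same sibling-partition $\pi$; for instance with $n=5$, $m=4$ the partition $\{\{1,2\},\{3,6\},\{4,7\},\{5,8\}\}$ is compatible with six tree structures.  You assert that ``the canonical labeling convention picks out the unique $\mathcal{L}$,'' but this is exactly what needs proof, and it is not automatic: an arbitrary labelling rule on $\mathcal{L}\setminus\{[n]\}$ will not give an injection.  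Your EGF alternative establishes equinumeration only, so it would still need to be refined to a type-by-type count to yield an actual bijection with the claimed size statistic.

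The paper sidesteps this difficulty by constructing the bijection \emph{recursively} rather than by a global labelling.  It orders the flats of $S$ (by rank, then by minimal element), takes the first flat $G_1$ as a block, replaces $G_1$ by a new vertex $w_1$ inside every flat of $S$ containing it, and recurses on the resulting spanning nested set of cardinality $m-1$ in $\mathsf{K}_{n-|G_1|+1}$.  Because each step is a contraction, the inverse is forced: the first block is recovered from the partition (it is a block contained in $[n]$, and the ordering tells you which one), $w_1$ is re-expanded, and one recurses.  This is the same underlying tree picture as yours, but the recursive peeling makes invertibility transparent, whereas your direct labelling leaves it unproved.
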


\begin{proof}
    Let us construct an explicit bijection. Consider the set $A=[n]\cup \{w_1,\ldots, w_{m-1}\}$ consisting of $n+m-1$ elements, i.e., the vertices of $\mathsf{K}_n$ and $m-1$ additional elements. Consider a spanning $\mathcal{G}$-nested set $S$, and linearly order the flats of $S$ as $G_1, \ldots, G_m$ first by rank, then, among elements of the same rank, by their minimal element. Recall that a nested set of $\K_n$ having $G_1$ as a minimal element is the same as a nested set in $\K_n/G_1$. Reading the list in the order described before, we build a partition of $A$ as follows: 
    
    \begin{itemize}
        \item If $|S|=1$, then the spanning property implies that $S=\{E\}$. The corresponding partition of $A$ that we assign in this case is $\{A\}$. 
        \item If $|S| > 1$, we set a part in our partition to be $G_1$. To get the remaining parts, proceed as follows. First, replace all the flats $G\in S$ such that $G\supseteq G_1$ by $(G\setminus G_1) \cup \{w_1\}$. Then, construct a partition with $m-1$ parts of $([n]\setminus( G_1\cup w_1)) \cup \{w_2,\ldots, w_{m-1}\}$: inductively, this corresponds to a $\mathcal{G}_{\min}$-nested set of cardinality $m-1$ in $\mathsf{K}_{n-|G_1|}$, as nested sets with $G_1$ as their minimal elements are in bijection with nested sets of $[G_1, E]$. 
        \end{itemize}
        By ordering the partition of $A$ using the order where all blocks without $w_i$'s are less than the blocks with them, and then ordering this second group by the minimal $w_i$, one can uniquely reconstruct the spanning $\mathcal{G}_{\min}$-nested set $S$.
\end{proof}

\begin{example}
Let $n=6$ and $m=4$. To illustrate the bijection in the previous proposition, we list some partitions of length $4$ of the set $\{1,2,3,4,5,6,w_1,w_2,w_3\}$ with at least two elements in each block, along with their corresponding spanning $\mathcal{G}_{\min}$-nested sets $S$. 
\begin{itemize}
    \item For $S = \{12,56,1234, 123456 \}$, the partition is $\{ \{1,2\}, \{5,6\}, \{3,4,w_1\}, \{w_2,w_3\}\}$.
    \item For $S = \{12,56,1256,123456\}$, the partition is $\{ \{1,2\}, \{5,6\}, \{w_1,w_2\}, \{3,4,w_3\}\}$.
    \item For $S = \{12,56,3456,123456\}$, the partition is $\{ \{1,2\}, \{5,6\}, \{3,4,w_2\}, \{w_1,w_3\}\}$.
    \item For $S = \{12,34,56,123456\}$, the partition is $\{ \{1,2\}, \{3,4\}, \{5,6\}, \{w_1,w_2,w_3\}\}$.
\end{itemize}
\end{example}

We will write $\chi_{m}(x) = (x-1)(x-2)\cdots(x-m+1)$ for the characteristic polynomial of $\mathsf{K}_m$, and $\overline{\chi}_m(x) = (x-2)\cdots (x-m+1)$ for the reduced characteristic polynomial. To avoid overloading the notation, we will also write $\chi_{\sigma_i}(x)$ for the polynomial $\chi_{|\sigma_i|}(x)$ whenever $\sigma_i$ denotes a block in a partition of a set.

As an immediate consequence of the bijection described in the previous statement, we obtain the following formula for $P_{\overline{\mathcal{M}}_{0,n+1}}(x)$.

\begin{corollary}\label{coro:rewriting}
    The following identity holds:
    \[P_{\overline{\mathcal{M}}_{0,n+1}}(x) =  \sum_{m\geq 1}\sum_{\substack{\sigma \vdash [n-1+m] \\ \ell(\sigma) = m \\ \min_i \sigma_i\geq 2}} \prod_{i=1}^{\ell(\sigma)}\overline{\chi}_{\sigma_i}(x).
    \]
\end{corollary}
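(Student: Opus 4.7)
The plan is to apply \Cref{thm:H_sum_G_nested} to the braid matroid $\mathsf{K}_n$ with building set $\mathcal{G}_{\min}$ and then reindex the resulting sum using the bijection in \Cref{prop:bijection flags and partitions}. Since $\H_{\mathsf{K}_n}^{\mathcal{G}_{\min}}(x) = P_{\overline{\mathcal{M}}_{0,n+1}}(x)$, the statement of \Cref{thm:H_sum_G_nested} reads
\[
P_{\overline{\mathcal{M}}_{0,n+1}}(x) = \sum_{S} \prod_{F\in S} \overline{\chi}_{\mathsf{K}_n|_F / \sup(S,F)}(x),
\]
where $S$ ranges over all spanning $\mathcal{G}_{\min}$-nested sets of $\mathsf{K}_n$.

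I would stratify this sum by the cardinality $m := |S|$ and then, within each stratum, invoke \Cref{prop:bijection flags and partitions}. That proposition asserts a bijection between spanning $\mathcal{G}_{\min}$-nested sets of $\mathsf{K}_n$ of cardinality $m$ and set partitions $\sigma \vdash [n-1+m]$ with $\ell(\sigma) = m$ parts, all of size at least $2$. Moreover, under this bijection, each $F \in S$ corresponds to a block $\sigma_i$ of size $|\sigma_i| = k$ where $\mathsf{K}_k$ is the simplification of the minor $\mathsf{K}_n|_F / \sup(S,F)$.

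The one combinatorial point I would make explicit is that simplification does not affect reduced characteristic polynomials: the lattice of flats of a polymatroid coincides with that of its simplification, so $\chi_{\mathsf{K}_n|_F/\sup(S,F)}(x) = \chi_{\mathsf{K}_k}(x) = \chi_k(x)$, and hence $\overline{\chi}_{\mathsf{K}_n|_F/\sup(S,F)}(x) = \overline{\chi}_k(x) = \overline{\chi}_{\sigma_i}(x)$ in the notational convention stated just before the corollary. Substituting, the product becomes $\prod_{i=1}^{\ell(\sigma)} \overline{\chi}_{\sigma_i}(x)$, and summing over the allowed partitions recovers the claimed formula.

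The main (very mild) obstacle is just being careful that the minors $\mathsf{K}_n|_F/\sup(S,F)$ are, up to simplification, honest smaller braid matroids. This was already pointed out at the start of the section: any minor of a graphic matroid by flats is again a graphic matroid, and for $\mathsf{K}_n$ the resulting simple graph on each connected piece is complete, so the simplification is a $\mathsf{K}_k$. With that remark in hand, the proof is a direct substitution and requires no further computation.
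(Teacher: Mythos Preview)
Your proposal is correct and follows exactly the same approach as the paper's own proof, which simply states that the assertion follows by combining \Cref{thm:H_sum_G_nested} with the characterization of spanning $\mathcal{G}_{\min}$-nested sets in \Cref{prop:bijection flags and partitions}. Your additional remark that simplification preserves the (reduced) characteristic polynomial is a helpful clarification, but it does not constitute a different route.
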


\begin{proof}
The assertion follows by combining \Cref{thm:H_sum_G_nested} with the characterization of $\mathcal{G}_{\min}$-nested sets of $\mathsf{K}_n$ proved in \Cref{prop:bijection flags and partitions}.
\end{proof}

Now we state a technical lemma that will allow us to rewrite the formula in the previous corollary as a sum over set partitions of $[n-1]$ instead of $[n+m-1]$ for varying $m$.

\begin{lemma}\label{lem:partitions}
    Let $p_{\sigma}$ be the number of set partitions of $[n]$ of type $\sigma = (\sigma_1, \sigma_2, \ldots, \sigma_s) \vdash n$. Consider the number $m_{\sigma}$ of set partitions of $[n+s]$ of type $(\sigma_1+1, \sigma_2+1, \dots, \sigma_{s}+1)$. Then, the following equality holds:
    \[ m_{\sigma} = \frac{(n+s)!}{n! \prod_{i=1}^s (\sigma_i+1)} p_{\sigma}.\]
\end{lemma}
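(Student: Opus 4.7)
The plan is to invoke the classical closed form for the number of set partitions of $[N]$ of a prescribed type. Writing $m_j(\tau)$ for the number of parts equal to $j$ in a composition $\tau = (\tau_1, \ldots, \tau_s) \vdash N$, this formula reads
$$ \#\{\text{set partitions of } [N] \text{ of type } \tau\} = \frac{N!}{\prod_{i=1}^s \tau_i! \cdot \prod_{j \geq 1} m_j(\tau)!},$$
which one derives by first choosing an ordered tuple of disjoint labeled blocks (contributing the multinomial $N!/\prod_i \tau_i!$) and then dividing by the overcount $\prod_j m_j(\tau)!$ coming from permuting blocks of equal size.

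I would then apply this formula to both $\sigma \vdash n$ and to the shifted composition $\sigma' := (\sigma_1+1, \ldots, \sigma_s+1) \vdash n+s$. The crucial observation is that incrementing every part by one simply shifts the multiplicity sequence: $m_j(\sigma') = m_{j-1}(\sigma)$ for all $j \geq 1$. Consequently $\prod_j m_j(\sigma')! = \prod_j m_j(\sigma)!$; informally, $\sigma$ and $\sigma'$ have the same symmetry group, since equal parts of $\sigma$ correspond bijectively to equal parts of $\sigma'$.

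Taking the ratio $m_\sigma / p_\sigma$, the symmetry denominators cancel exactly and only the multinomial part survives, giving
$$ \frac{m_\sigma}{p_\sigma} = \frac{(n+s)!/\prod_i (\sigma_i+1)!}{n!/\prod_i \sigma_i!} = \frac{(n+s)!}{n!} \cdot \prod_{i=1}^s \frac{\sigma_i!}{(\sigma_i+1)!} = \frac{(n+s)!}{n! \prod_{i=1}^s (\sigma_i+1)},$$
which is the claimed identity.

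The only point requiring attention is the bookkeeping in the second paragraph: one must confirm that the multiplicity-shift identity $m_j(\sigma') = m_{j-1}(\sigma)$ really yields equality of the two symmetry factors after reindexing the product over $j$. This is immediate, so no substantive obstacle arises. As an alternative, a purely bijective argument could be given by constructing a $\bigl(\prod_i(\sigma_i+1)\bigr)$-to-$\bigl((n+s)!/n!\bigr)$ correspondence between refined partitions of $[n]$ of type $\sigma$ (with a distinguished insertion position in each block) and partitions of $[n+s]$ of type $\sigma'$ (with an ordered labeling of the $s$ new elements), but the symmetry-factor calculation above is shorter and more transparent.
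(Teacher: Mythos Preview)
Your proof is correct. The paper takes a different, more bijective route: it counts partitions of $[n+s]$ of type $(\sigma_1+1,\dotsc,\sigma_s+1)$ by first choosing which $s$ elements to remove ($\binom{n+s}{s}$ ways), then partitioning the remaining $n$ elements into type $\sigma$ ($p_\sigma$ ways), then distributing the $s$ removed elements one per block ($s!$ ways), and finally dividing by the overcount $\prod_i(\sigma_i+1)$ coming from the choice of ``distinguished element'' in each block. Your approach instead invokes the closed formula $N!/\bigl(\prod_i\tau_i!\prod_j m_j(\tau)!\bigr)$ directly and exploits the shift $m_j(\sigma')=m_{j-1}(\sigma)$ to cancel the symmetry factors. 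Your argument is shorter and more mechanical once the formula is in hand; the paper's argument is self-contained and closer in spirit to the bijective alternative you sketch at the end. Both are entirely elementary, and neither offers a real advantage over the other.
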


\begin{proof}
   We must partition $[n+s]$ into $s$ parts, none of which has size $1$. To do this, first delete $s$ elements from $[n+s]$: there are $\binom{n+s}{s}$ such  choices. Call $A$ the set of $n$ numbers in $[n+s]$ that remain  after the $s$ chosen elements are deleted. We partition $A$ using type $(\sigma_1,\ldots,\sigma_s)$ in exactly $p_{\sigma}$ different ways. Each such partition can be augmented to a partition of $[n+s]$ of type $(\sigma_1+1,\ldots,\sigma_s+1)$ by adding back the $s$ elements that we removed from $[n+s]$ to get $A$, one to each part. There are $s!$ ways of adding these elements to the parts of $A$.
    However, to avoid multiple counting, we must take into account the number of ways that a specific element of a part of our new partition of $[n+s]$ could have been picked, i.e., we have to divide by $\prod_{i=1}^s (\sigma_i+1)$, which leaves us with the equality claimed in the statement.
\end{proof}

Now we put the pieces together to prove \Cref{thm:new_Poinc_M_0n}.

\newtheorem*{thm:intro5}{Theorem~\ref{thm:new_Poinc_M_0n}}
\begin{thm:intro5}
    \itshape The Poincar\'e polynomial of $\overline{\mathcal{M}}_{0,n+1}$ is 
    \[P_{\overline{\mathcal{M}}_{0,n+1}}(x) = \sum_{\lambda \vdash [n-1]} \frac{(n-1+\ell(\lambda))!}{(n-1)!} \prod_{i=1}^{\ell(\lambda)} \frac{ \overline{\chi}_{\lambda_i+1}(x)}{\lambda_i +1}.\]
    \end{thm:intro5}
    \begin{proof}
    This follows from \Cref{coro:rewriting} using \Cref{lem:partitions}.
\end{proof}

\subsection{A formula by Aluffi--Marcolli--Nascimento}

Our next goal is to give a new and self-contained proof of a recent result by Aluffi, Marcolli, and Nascimento \cite[Theorem~1.1]{aluffi-marcolli-nascimento}. We follow their notation and write $s(n,k)$ for the (signed) Stirling numbers of the first kind and $S(n,k)$ for the Stirling numbers of the second kind.

\newtheorem*{thm:intro3}{Theorem~\ref{thm:amn-formula}}
\begin{thm:intro3}
\itshape The following formula for the Poincar\'e polynomial of $\overline{\mathcal{M}}_{0,n+1}$ holds:
    \[P_{\overline{\mathcal{M}}_{0,n+1}}(x) = (1-x)^n \sum_{k\geq 0}\sum_{j\geq 0} s(k+n,k+n-j)\, S(k+n-j,k+1)\, x^{k+j}.\]
\end{thm:intro3}

\begin{proof}
    We start from the formula of \Cref{coro:rewriting}. Since $P_{\overline{\mathcal{M}}_{0,n+1}}(x)$ is palindromic of degree $n-2$, we obtain
    \begin{align*}
    P_{\overline{\mathcal{M}}_{0,n+1}}(x) &= x^{n-2}P_{\overline{\mathcal{M}}_{0,n+1}}(x^{-1}) \\
    &= x^{n-2} \sum_{m \geq 1} \sum_{\substack{\sigma \vdash [n-1+m] \\ \ell(\sigma) = m \\ \min_i \sigma_i\geq 2}} \prod_{i=1}^{m} \overline{\chi}_{\sigma_i}(x^{-1})\\
    &= \sum_{m \geq 1} \sum_{\substack{\sigma \vdash [n-1+m] \\ \ell(\sigma) = m \\ \min_i \sigma_i\geq 2}} \frac{x^{n-2+m}}{(1-x) ^{m} } \prod_{i=1}^{m} \chi_{\sigma_i}(x^{-1}) \\
    &= (1-x)^n \sum_{m \geq 1} \sum_{\substack{\sigma \vdash [n-1+m] \\ \ell(\sigma) = m \\ \min_i \sigma_i\geq 2}} \left( \sum_{k \geq 0} \binom{n+m+k-1}{k} x^{n-2+m+k} \right) \prod_{i=1}^{m} \chi_{\sigma_i}(x^{-1})  \\
    &= (1-x)^n \sum_{p \geq 1} \sum_{\substack{\lambda \vdash [n-1+p] \\ \ell(\lambda) = p}}x^{n-2+p} \prod_{i=1}^{p} \chi_{\lambda _i}(x^{-1}),
    \end{align*}
    where the partition $\lambda$ is obtained from $\sigma$ by adding $k$ singletons in all possible ways.
    By reindexing the first sum, we obtain 
    \begin{equation}
    \label{eq:key}
    P_{\overline{\mathcal{M}}_{0,n+1}}(x) = (1-x)^n \sum_{k \geq 0} \sum_{\substack{\lambda \vdash [n+k] \\ \ell(\lambda) = k+1}}x^{n-1+k} \prod_{i=1}^{k+1} \chi_{\lambda _i}(x^{-1}).
    \end{equation}

To finish the proof, we note the following elementary identity:
\begin{equation}\label{eq:stirling}
\sum_{j} x^j s(a,a-j)S(a-j,b) = x^{a-b}\sum_{\substack{\lambda \vdash [a] \\ \ell(\lambda)=b}} \prod_{i=1}^b \chi_{\lambda_i}(x^{-1}).
\end{equation}
To see this, notice that both sides count the number of permutations $\sigma \in \mathfrak{S}_a$ together with a partition of its set of cycles into exactly $b$ blocks.   The parameter $-x$ records the number of cycles of the permutation. Now, applying the identity in Equation~\eqref{eq:stirling} to the formula for $P_{\overline{\mathcal{M}}_{0,n+1}}(x)$ obtained in Equation~\eqref{eq:key} yields the equality
\[P_{\overline{\mathcal{M}}_{0,n+1}}(x) = (1-x)^n \sum_{k\geq 0}\sum_{j\geq 0} s(k+n,k+n-j)\, S(k+n-j,k+1)\, x^{k+j},\]
as desired.
\end{proof}

\subsection{Generating functions and recursions}

Several different formulas for the above polynomials (and their exponential generating functions) were obtained by Getzler \cite{Getzler94}, Manin \cite{manin}, and Keel \cite{keel}.  In this section, we show how our Theorem~\ref{thm:g-chow-and-g-reduced}, when restricted to $\M = \mathsf{K}_n$ and $\mathcal{G} = \mathcal{G}_{\min}$, can be used to recover these formulas.

\begin{proposition}\label{prop:getzler-manin}
    Let $H$ be the exponential generating function of the Poincar\'e polynomial of $\overline{\mathcal{M}}_{0,n+1}$:
    \[H = H(x,t)= \sum_{n\geq 1} P_{\overline{\mathcal{M}}_{0,n+1}}(x) \frac{t^n}{n!} = \sum_{n\geq 1} \H_{\K_{n}}^{\mathcal{G}_{\min}}(x) \frac{t^n}{n!}. \]
    
    \begin{enumerate}[\normalfont(i)]
    \item The incidence algebra equality $-\overline{\chi}^{\mathcal{G}_{\min}} \cdot \H^{\mathcal{G}_{\min}} = \delta$ is equivalent to the compositional formula \cite[p.~228]{Getzler94} 
    \[H \left( t-\frac{(1+t)^x-1-xt}{x(x-1)} ,\ t\right)=t.\]
    
    \item The incidence algebra equality $ \H^{\mathcal{G}_{\min}}\cdot \overline{\chi}^{\mathcal{G}_{\min}} = -\delta$ is equivalent to the functional equation \cite[Theorem~0.3.1~(0.7)]{manin}
    \[(1+H)^x = x^2H+1-x(x-1)t.\]
    \end{enumerate}
\end{proposition}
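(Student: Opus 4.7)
The plan is to expand each convolution identity in the incidence algebra at the interval $[\varnothing, E]$ for $\K_n$, translate the sum over flats into a sum over set partitions of $[n]$ via the bijection recalled at the beginning of Section~\ref{sec:braid}, and then apply the exponential formula to package the resulting family of polynomial identities (one per $n$) into a single identity of exponential generating functions in $t$ and $x$.

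For part~(i), I would set $f_\ell(x) := \H^{\mathcal{G}_{\min}}_{\K_\ell}(x)$, with $f_0 := 1$, so that $H(t,x) = \sum_{\ell \geq 1} f_\ell(x)\,t^\ell/\ell!$. For a flat $F$ of $\K_n$ corresponding to a partition $\pi \vdash [n]$ with block sizes $\lambda_1,\ldots,\lambda_{\ell(\pi)}$, the minor $\K_n|_F \cong \bigoplus_i \K_{\lambda_i}$ carries the minimal building set, and a short bookkeeping using $\chi_{\K_1}=1$, $\overline{\chi}_m = -\chi_m/(1-x)$ for $m \geq 2$, and the definition of $\overline{\chi}^{\mathcal{G}_{\min}}$ on direct sums gives
\[
\overline{\chi}^{\mathcal{G}_{\min}}_{\varnothing F}(x) \;=\; -\prod_i \phi_{\lambda_i}(x), \qquad \phi_1 := 1, \quad \phi_m(x) := -\overline{\chi}_m(x) \ \text{for}\ m \geq 2.
\]
With $\Phi(t,x) := \sum_{m\geq 1}\phi_m(x)\,t^m/m!$, the exponential formula yields
\[
\sum_{n \geq 0} \frac{t^n}{n!}\sum_{\pi \vdash [n]} f_{\ell(\pi)}(x)\prod_i \phi_{\lambda_i}(x) \;=\; 1 + H(\Phi(t,x),x).
\]
The identity $(-\overline{\chi}^{\mathcal{G}_{\min}} \cdot \H^{\mathcal{G}_{\min}})_{\varnothing E} = \delta_{\varnothing E}$ on $\K_n$ is precisely the assertion that this right-hand side equals $1 + t$ (the $n=0,1$ coefficients are forced by $f_0 = f_1 = 1$, and the higher coefficients vanish by the incidence identity for $n \geq 2$), so the family of incidence identities is equivalent to $H(\Phi(t,x),x) = t$. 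Finally, the binomial series $(1+t)^x = 1 + xt + x(x-1)\sum_{m\geq 2}\overline{\chi}_m(x)\,t^m/m!$ gives $\Phi(t,x) = t - \frac{(1+t)^x - 1 - xt}{x(x-1)}$, which is Getzler's identity.

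For part~(ii), the analogous expansion of $(\H^{\mathcal{G}_{\min}} \cdot \overline{\chi}^{\mathcal{G}_{\min}})_{\varnothing E} = -\delta_{\varnothing E}$ produces a sum over $\pi \vdash [n]$ with the block and length weights interchanged: the block weight is $\prod_i f_{\lambda_i}(x)$ (using the K\"unneth-type factorization $D(\bigoplus_i \K_{\lambda_i}, \mathcal{G}_{\min}) \cong \bigotimes_i D(\K_{\lambda_i}, \mathcal{G}_{\min})$), while the length weight is $\overline{\chi}^{\mathcal{G}_{\min}}_{\K_{\ell(\pi)}}(x)$, which equals $-1$ for $\ell=1$ and $\overline{\chi}_\ell(x)$ for $\ell \geq 2$. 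Setting $G(y,x) := -y + \sum_{\ell \geq 2}\overline{\chi}_\ell(x)\,y^\ell/\ell!$, the exponential formula now gives $G(H(t,x),x) = -t$, and the same binomial identity evaluates $G(y,x) = -y + \frac{(1+y)^x - 1 - xy}{x(x-1)}$. Rearranging $G(H,x) = -t$ yields $(1+H)^x = x^2 H + 1 - x(x-1)t$, which is Manin's identity. In both parts, the two directions of the equivalence follow from this chain, since the translation between the family of coefficient identities and its exponential generating function is invertible.

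The main obstacle is the bookkeeping at the boundary cases: the convention $\overline{\chi}^{\mathcal{G}}_{FF} = -1$ on singleton intervals (equivalently, that $\overline{\chi}^{\mathcal{G}}$ equals $-1$ on the empty polymatroid $\K_1$) must absorb the $n = 0,1$ terms of the generating function \emph{exactly}, so that the family of convolution identities collapses into a single clean compositional equation without residual low-order correction. Verifying this alignment, together with the fact that $\overline{\chi}^{\mathcal{G}_{\min}}$ factors over direct sums only up to an overall sign (which is what forces the per-block weight to be the modified sequence $(\phi_m)$ rather than $(\overline{\chi}_m)$ itself), is the most delicate point of the argument.
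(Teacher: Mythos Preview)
Your proposal is correct and follows essentially the same route as the paper. Both arguments invoke the compositional formula (Stanley EC2, Theorem~5.1.4) to convert the incidence-algebra convolution on the partition lattice $\mathcal{L}(\K_n)\cong\Pi_n$ into a composition of exponential generating functions, and both compute the EGF of $-\overline{\chi}^{\mathcal{G}_{\min}}_{\K_n}$ explicitly from the binomial series for $(1+t)^x$; your $\Phi$ is exactly the paper's $G$, and your $G(y,x)$ is its negative, so your equation $G(H)=-t$ is the paper's $G(H)=t$. The only difference is presentational: you unpack the multiplicativity of $\overline{\chi}^{\mathcal{G}_{\min}}$ and $\H^{\mathcal{G}_{\min}}$ over blocks and track the boundary cases $n=0,1$ explicitly, whereas the paper simply cites the compositional formula and states the outcome.
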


\begin{proof}
The characteristic polynomial is $\overline{\chi}_{\mathsf{K}_n}(x) = (x-2)(x-3)\cdots (x-n+1)$. Therefore, the exponential generating series of $-\overline{\chi}_{\K_{n}}^{\mathcal{G}_{\min}}(x)$ can be rewritten in closed form as
\[G:= \sum_{n\geq 1} -\overline{\chi}_{\K_{n}}^{\mathcal{G}_{\min}}(x) \frac{t^n}{n!}= t- \frac{(1+t)^x-1-xt}{x(x-1)}. \]
The compositional formula \cite[Theorem~5.1.4]{stanley-ec2} can be applied to the identity $-\overline{\chi}^{\mathcal{G}_{\min}} \cdot \H^{\mathcal{G}_{\min}} = \delta$, from which the statement $H(G(t))=t$ follows immediately. 

For the second statement, we reason analogously but with the dual identity $ \H^{\mathcal{G}_{\min}}\cdot \overline{\chi}^{\mathcal{G}_{\min}} = -\delta$. We obtain that $G(H(t))=t$ or, in other words,
\[ H - \frac{(1+H)^x-1-xH}{x(x-1)}=t.\]
The result follows by rearranging the terms.
\end{proof}

Lastly, we consider the formula by Keel \cite[p.~550]{keel}.

\begin{proposition}\label{prop:quadratic}
The polynomial $P_{\overline{\mathcal{M}}_{0,n+1}}(x) = \H^{\G_{\min}}_{\mathsf{K}_{n}}(x)$ is uniquely determined by either of the two following recursions:
\begin{align}
    \H_{\K_n}^{\mathcal{G}_{\min}}(x) &= \H_{\K_{n-1}}^{\mathcal{G}_{\min}}(x) + x\,\sum_{j=2}^{n-1}\binom{n-1}{j} \H_{\K_j}^{\mathcal{G}_{\min}}(x)\cdot \H_{\K_{n-j}}^{\mathcal{G}_{\min}}(x),\label{eq:manin}\\
    &= (1+x)\,\H_{\K_{n-1}}^{\mathcal{G}_{\min}}(x) + \frac{x}{2}\,\sum_{j=2}^{n-2}\binom{n}{j} \H_{\K_j}^{\mathcal{G}_{\min}}(x)\cdot \H_{\K_{n-j}}^{\mathcal{G}_{\min}}(x),\label{eq:keel}
\end{align}
for $n\geq 3$, and $\H^{\mathcal{G}_{\min}}_{\K_1}(x)=\H^{\mathcal{G}_{\min}}_{\K_2}(x) = 1$. 
\end{proposition}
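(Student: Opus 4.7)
The plan is to translate each recursion into a first-order ODE for the exponential generating function
\[ H(t) = \sum_{n \geq 1} P_n \frac{t^n}{n!}, \qquad P_n := \H^{\mathcal G_{\min}}_{\mathsf K_n}(x), \]
show that both translations produce the \emph{same} ODE, and then derive that ODE from Manin's functional equation established in Proposition~\ref{prop:getzler-manin}(ii). Throughout, $H(0) = 0$ and $P_1 = P_2 = 1$.

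For the Manin recursion \eqref{eq:manin}, I would multiply both sides by $t^{n-1}/(n-1)!$ and sum over $n \geq 2$ (the case $n=2$ being the trivial identity $P_2 = P_1$). The required EGF manipulations are
\[ \sum_{n \geq 2} P_n \frac{t^{n-1}}{(n-1)!} = H_t - 1, \qquad \sum_{n \geq 2} P_{n-1} \frac{t^{n-1}}{(n-1)!} = H, \]
together with the convolution identity
\[ \sum_{n \geq 2} \sum_{j=2}^{n-1} \binom{n-1}{j} P_j P_{n-j} \frac{t^{n-1}}{(n-1)!} = (H - t)\, H_t, \]
where the factor $H-t$ arises because $P_1 = 1$ is excluded by the constraint $j \geq 2$. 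This translates \eqref{eq:manin} into
\begin{equation}\label{eq:shared-ode}\tag{$\star$}
H_t \bigl(1 - x(H - t)\bigr) = 1 + H.
\end{equation}

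For the Keel recursion \eqref{eq:keel}, I would instead multiply by $t^n/n!$ and sum over $n \geq 3$. The analogous convolution $\sum \binom{n}{j} P_j P_{n-j} t^n/n! = (H-t)^2$ and $\sum_{n \geq 3} P_{n-1} t^n/n! = \int_0^t (H(s) - s)\,ds$ yield the integral identity
\[ H - t - \tfrac{t^2}{2} = (1+x) \int_0^t \bigl(H(s) - s\bigr)\, ds + \tfrac{x}{2}(H - t)^2. \]
Differentiating in $t$ and cancelling common terms reduces this to \eqref{eq:shared-ode} as well. Since the ODE \eqref{eq:shared-ode} determines the $P_n$ order by order from $H(0) = 0$, and each recursion determines them from $P_1 = P_2 = 1$, the original recursions are equivalent to \eqref{eq:shared-ode}.

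Finally, I would derive \eqref{eq:shared-ode} from the functional equation $(1+H)^x = x^2 H + 1 - x(x-1) t$ of Proposition~\ref{prop:getzler-manin}(ii). Differentiating that equation in $t$ gives $H_t\bigl((1+H)^{x-1} - x\bigr) = -(x-1)$, while a direct rearrangement of the equation itself (solving for $x(H-t)$) gives
\[ 1 - x(H - t) = \frac{(1+H)\bigl(x - (1+H)^{x-1}\bigr)}{x-1}. \]
Multiplying these two identities yields \eqref{eq:shared-ode} on the nose. The only mild obstacle in this plan is the correct handling of the integral term in the Keel translation, but this disappears cleanly upon differentiation; the rest is mechanical EGF bookkeeping.
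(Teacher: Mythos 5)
Your proof is correct, and the EGF computations check out (the convolution identities, the integral representation for the Keel translation, the differentiation to arrive at the shared ODE $H_t\bigl(1 - x(H - t)\bigr) = 1 + H$, and the final derivation from Manin's functional equation all hold; the division by $(1+H)^{x-1} - x$ in the last step is legitimate since this series has constant term $1 - x \neq 0$). However, your route is genuinely different from the paper's. The paper proves the Manin recursion \eqref{eq:manin} either by citing Manin directly or by a bijective argument on FY-monomials: one splits each $\mathcal{G}_{\min}$-nested set $S$ according to the smallest flat $G \in S$ whose nontrivial block contains the vertex $n$, factoring the monomial into a piece on $\mathsf{K}_{|G|}$ and a piece on $\mathsf{K}_n/G$. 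The Keel recursion \eqref{eq:keel} is then deduced from \eqref{eq:manin} by purely algebraic bookkeeping (reindexing the sum $j \mapsto n-j$, applying $\binom{n-1}{n-j} = \binom{n-1}{j-1}$ and Pascal's identity, then averaging). Your approach instead treats the two recursions symmetrically, converting each into an ODE for the exponential generating function $H$ and observing that they collapse to the same first-order ODE, which you then derive from Proposition~\ref{prop:getzler-manin}(ii). The benefit of your route is a unified, mechanical verification that exposes both recursions as facets of a single differential equation; the paper's combinatorial proof of \eqref{eq:manin} instead gives structural insight at the level of the Chow ring basis, at the cost of then needing a separate manipulation to obtain \eqref{eq:keel}.
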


\begin{proof}
    The recursion in Equation~\eqref{eq:manin} (and thus, as we will explain below, also Equation~\eqref{eq:keel}) follows from the functional equations proved in the previous statement: this proof is carried out in \cite[Corollary~0.3.2]{manin}. We take the opportunity, however, to prove it combinatorially via the FY-monomials. Consider an FY-monomial with associated $\G_{\min}$-nested set $S$, and let $G \in S$ be the smallest flat whose nontrivial part contains $n$, if it exists.
    Let $F_1, \dots, F_k$ be the elements of $S_{<G}$ and $H_1, \dots, H_h$ be the elements of $S_{\not \leq G}$, so that the FY-monomial is $\prod_{i=1}^k x_{F_i}^{e_i} x_G^e \prod_{j=1}^h x_{H_j}^{e_j}$.
    Notice that $\prod_{i=1}^k x_{F_i}^{e_i} x_{G \setminus \{n\}}^{e-1}$ is an FY-monomial for the complete induced subgraph on $G \setminus \{n\}$ and $\prod_{j=1}^h x_{H_j\vee G}^{e_j}$ is an FY-monomial for the matroid $\K_n/G$ with respect to the minimal building set.
    The first summand of Equation~\eqref{eq:manin} corresponds to FY-monomials where such $G$ does not exist; the other summands correspond to the connected flats $G$ of rank $j$ whose nontrivial part contains $n$.

    The recursion appearing in Equation~\eqref{eq:keel} %was proved by Keel in \cite[p.~550]{keel}. 
    can be deduced from \eqref{eq:manin} by first rearranging the term in the sum corresponding to $j=n-1$:
    \begin{align}
        \H_{\K_n}^{\mathcal{G}_{\min}}(x) &= \H_{\K_{n-1}}^{\mathcal{G}_{\min}}(x) + x\,\sum_{j=2}^{n-1}\binom{n-1}{j} \H_{\K_j}^{\mathcal{G}_{\min}}(x)\cdot \H_{\K_{n-j}}^{\mathcal{G}_{\min}}(x)\nonumber\\
        &= (1+x)\H_{\K_{n-1}}^{\mathcal{G}_{\min}}(x)+ x\,\sum_{j=2}^{n-2}\binom{n-1}{j} \H_{\K_j}^{\mathcal{G}_{\min}}(x)\cdot \H_{\K_{n-j}}^{\mathcal{G}_{\min}}(x). \label{eq:aux1}
    \end{align}
    Then, using the change of variables $i = n-j$ on the sum of the right-hand side, this is equivalent to
    \begin{align}
    \H_{\K_n}^{\mathcal{G}_{\min}}(x) &= (1+x)\H_{\K_{n-1}}^{\mathcal{G}_{\min}}(x)+ x\,\sum_{i=2}^{n-2}\binom{n-1}{n-i} \H_{\K_{n-i}}^{\mathcal{G}_{\min}}(x)\cdot \H_{\K_{i}}^{\mathcal{G}_{\min}}(x)\nonumber\\
     &=(1+x)\H_{\K_{n-1}}^{\mathcal{G}_{\min}}(x)+ x\,\sum_{j=2}^{n-2}\binom{n-1}{j-1} \H_{\K_{n-j}}^{\mathcal{G}_{\min}}(x)\cdot \H_{\K_{j}}^{\mathcal{G}_{\min}}(x)\label{eq:aux2},
    \end{align}
    where in the second step we used the identity $\binom{n-1}{n-i} = \binom{n-1}{i-1}$ and the change of variables $i=j$.
    Keel's formula \eqref{eq:keel} follows from adding Equations \eqref{eq:aux1} and \eqref{eq:aux2}, applying Pascal's identity $\binom{n-1}{j} + \binom{n-1}{j-1} = \binom{n}{j}$, and dividing by two.
\end{proof}

\section{Conversion between building sets}

Given two building sets $\mathcal G$ and $\mathcal G'$ on $\mathcal L(\M)$ with $\mathcal{G}' \subseteq \mathcal{G}$, we now explain how to relate $\H_{\M}^{\mathcal G}(x)$ and $\H_{\M}^{\mathcal G'}(x)$.
For simplicity, we assume $\M$ to be a matroid in this section.
It will suffice to consider the case when $\mathcal G$ and $\mathcal G'$ differ by one element (see the first part of Proposition~\ref{prop:onestep}).  We thus show the following.

\begin{theorem}\label{thm:onestep}
If $\mathcal{G}$ and $\mathcal{G}' = \mathcal G \setminus \{G\}$ are building sets of $\mathcal L(\M)$ differing by one element $G$, then
\[
\H_\M^{\mathcal{G}}(x) = \H_\M^{\mathcal{G}'}(x) + (x+\cdots + x^{|f(\mathcal G'|_G)|-1}) \H_{\M|_G}^{\mathcal G'|_G}(x)\H_{\M/G}^{\mathcal G'/G}(x).\]
\end{theorem}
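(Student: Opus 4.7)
The strategy is to use the FY-monomial basis (\Cref{thm:fy-monomials}) and reduce the problem in two steps, ultimately to an elementary algebraic identity. Partition the FY-basis of $D(\M, \mathcal{G})$ into monomials not containing $x_G$ (set $B_0$) versus those containing $x_G^e$ with $e \geq 1$ (set $B_+$), and partition the FY-basis of $D(\M, \mathcal{G}')$ into monomials whose nested set contains $f(\mathcal{G}'|_G)$ (set $B'_+$) versus those whose nested set does not (set $B'_-$). The key combinatorial lemma is that a subset $S \subseteq \mathcal{G}'$ is $\mathcal{G}$-nested if and only if it is $\mathcal{G}'$-nested and $f(\mathcal{G}'|_G) \not\subseteq S$. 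The forward direction is immediate since $f(\mathcal{G}'|_G)$ is an incomparable subset with join $G \in \mathcal{G}$; the reverse uses the direct-product decomposition $[\varnothing, G] \cong \prod_{H \in f(\mathcal{G}'|_G)} [\varnothing, H]$ from factorability of $\mathcal{G}'$ at $G$, combined with the $\mathcal{G}'|_H$-nestedness of $S \cap [\varnothing, H]$ on each factor, to conclude that any antichain in $S \cap [\varnothing, H_i]$ joining to $H_i$ must equal $\{H_i\}$. This identifies $B_0 \leftrightarrow B'_-$ in a degree-preserving manner, so
\[ \H_\M^\mathcal{G}(x) - \H_\M^{\mathcal{G}'}(x) = \sum_{m \in B_+} x^{\deg m} - \sum_{m \in B'_+} x^{\deg m}. \]

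Next, invoke the classical bijective decomposition of FY-monomials whose nested set contains $G$ (see \cite{feichtner-yuzvinsky, pagaria-pezzoli}): each such monomial factors uniquely as $m_1 m_2$ where $m_1$ is an FY-monomial for $(\M|_G, \mathcal{G}|_G)$ involving $x_G$ and $m_2$ is an FY-monomial for $(\M/G, \mathcal{G}/G)$, with degrees adding. The analogous decomposition applies to $B'_+$: each such monomial factors uniquely as a pair consisting of an FY-monomial for $(\M|_G, \mathcal{G}'|_G)$ whose nested set contains $f(\mathcal{G}'|_G)$ and an FY-monomial for $(\M/G, \mathcal{G}'/G)$. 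Since $\mathcal{G}/G = \mathcal{G}'/G$, the contraction factor $\H_{\M/G}^{\mathcal{G}/G}(x)$ pulls out, and applying the lemma of the previous paragraph to the pair $(\mathcal{G}|_G, \mathcal{G}'|_G)$ on $\M|_G$---which are building sets differing by $G$, now the top flat of $\M|_G$---reduces the theorem to the ``top-element'' identity
\[ \H_{\M|_G}^{\mathcal{G}|_G}(x) = (1 + x + \cdots + x^{k-1})\,\H_{\M|_G}^{\mathcal{G}'|_G}(x), \quad k := |f(\mathcal{G}'|_G)|. \]

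For this top-element case, factorability of $\mathcal{G}'|_G$ gives $\mathcal{L}(\M|_G) \cong \prod_{i=1}^k [\varnothing, H_i]$ and $\H_{\M|_G}^{\mathcal{G}'|_G}(x) = \prod_i \H_{\M|_{H_i}}^{\mathcal{G}'|_{H_i}}(x)$, so every $\mathcal{G}'|_G$-nested set factors as $\bigsqcup_i S_i$ with $S_i \subseteq \mathcal{G}'|_{H_i}$ nested. Expand $\H_{\M|_G}^{\mathcal{G}|_G}(x)$ as a sum over $\mathcal{G}|_G$-nested sets, each of the form $(\bigsqcup_i S_i)$ with $G$ optionally adjoined, partition by $T := \{i : H_i \notin S_i\}$, and write $S_i = A_i \cup \{H_i\}$ for $i \notin T$ and $S_i = A_i$ for $i \in T$, with $A_i \not\ni H_i$. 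Letting $\bar d_i := \rk(H_i) - \rk(\sup A_i)$, the $x_G$-exponent contribution is $(x + \cdots + x^{D-1})$ with $D = \sum_{i \in T} \bar d_i$, and after substituting $u_i := x^{\bar d_i}$ and collecting, the top-element identity reduces to the polynomial identity
\[ \sum_{T \subseteq [k]} (1-x)^{|T|} \prod_{i \notin T} (x - u_i)\,\Bigl(1 - \prod_{i \in T} u_i\Bigr) = (1 - x^k)\prod_{i=1}^k (1 - u_i) \]
in indeterminates $u_1, \ldots, u_k$. This holds because splitting $1 - \prod_{i \in T} u_i$ into its two terms and distributing over $T$ yields $\prod_i [(x - u_i) + (1-x)] - \prod_i [(x - u_i) + (1-x)u_i] = \prod_i (1 - u_i) - x^k \prod_i (1 - u_i)$. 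The main obstacle will be carefully verifying the bijective decomposition invoked in the second paragraph---specifically, that for a $\mathcal{G}$-nested $S \ni G$ and $F \in S$ with $F \not\leq G$, the quantity $\rk(F) - \rk(\sup(S, F))$ agrees with the analogous quantity for $F \vee G$ in the corresponding $\mathcal{G}/G$-nested set on $\M/G$, which is standard and follows from the analyses in \cite{feichtner-yuzvinsky, pagaria-pezzoli}.
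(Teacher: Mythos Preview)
Your proposal is correct and takes a genuinely different route from the paper's proof.

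The paper argues geometrically via fans: it invokes the result of Feichtner--M\"uller that $\Sigma_{\mathcal G}$ is the stellar subdivision of $\Sigma_{\mathcal G'}$ at the cone for $f(\mathcal G'|_G)$, then applies Keel's blow-up formula for Chow rings together with the identification (from \cite{brauner-eur-pratt-vlad}) of the closed star of that cone as the product $\Sigma_{\mathcal G'|_G}\times\Sigma_{\mathcal G'/G}$. Three citations, essentially no computation.

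Your approach is purely combinatorial on the FY-basis. Your key lemma---that $S\subseteq\mathcal G'$ is $\mathcal G$-nested iff it is $\mathcal G'$-nested and $f(\mathcal G'|_G)\not\subseteq S$---is exactly the combinatorial shadow of the stellar-subdivision statement, and your ``bijective decomposition'' of $B_+$ and $B'_+$ is the FY-monomial translation of the star-fan product. So you are reproving, at the basis level, the same structural facts the paper imports. What you gain is that the argument becomes self-contained modulo \Cref{thm:fy-monomials}: no fan machinery, no appeal to Keel, and the final step is the clean identity $\sum_{T}(1-x)^{|T|}\prod_{i\notin T}(x-u_i)(1-\prod_{i\in T}u_i)=(1-x^k)\prod_i(1-u_i)$, which you verify in one line. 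What the paper's approach gains is brevity and a transparent geometric picture (useful later in Section~\ref{sec:eg}).

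Two places in your sketch deserve a sentence more of care when you write it out. First, the reverse direction of your key lemma tacitly uses that every $a\in\mathcal G'$ with $a\leq G$ lies below a \emph{unique} $H_i\in f(\mathcal G'|_G)$; this is standard for building sets but should be stated. Second, for the decomposition of $B_+$ you need that for $F\in S$ with $F\not\leq G$ (and $G\in S$) the quantity $\rk(F)-\rk(\sup(S,F))$ matches $\rk_{\M/G}(F\vee G)-\rk_{\M/G}(\sup(\widetilde S,F\vee G))$; you flag this, and it is indeed routine from the nested-set axioms, but it is the one spot where a reader might want the details spelled out rather than a citation.
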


For the proof, we need the following description of $D(\M,\mathcal G)$ as the Chow ring of a fan (see \cite{feichtner-yuzvinsky}).  The \emph{Bergman fan} of $\M$ with respect to $\mathcal G$ is the fan $\Sigma_{\mathcal G}$ in $\R^E$ consisting of the cones $\R_{\geq 0}\{\mathbf e_G : G \in S\}$, one for each $\mathcal G$-nested set $S$.  Here, we denoted by $\mathbf e_G$ the sum $\sum_{i\in G} \mathbf e_i$ of standard basis vectors.
The ring $D(\M,\mathcal G)$ is equal to the Chow ring $A^\bullet(\Sigma_{\mathcal G})$ of the fan $\Sigma_{\mathcal G}$.
We now note the following.

\begin{proposition}\label{prop:onestep}\cite[Proof of Theorem 4.2]{FM05}
Let $\mathcal{G} \supset \mathcal{H}$ be two building sets of $\mathcal{L}(\M)$, and let $G$ be a minimal element of $\mathcal G \setminus \mathcal H$.
Then, $\mathcal{G}' := \mathcal G \setminus \{G\}$ is a building set.
Moreover, the fan $\Sigma_{\mathcal{G}}$ is the stellar subdivision of the fan $\Sigma_{\mathcal G'}$ at the face corresponding to the nested set $f(\mathcal G'|_G)$ of factors of $G$ in $\mathcal G'$.
\end{proposition}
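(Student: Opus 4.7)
The statement has two claims: that $\mathcal{G}' := \mathcal{G} \setminus \{G\}$ is a building set, and that $\Sigma_{\mathcal{G}}$ is the stellar subdivision of $\Sigma_{\mathcal{G}'}$ at the face $\sigma := \operatorname{cone}\{\mathbf e_H : H \in f(\mathcal{G}'|_G)\}$ with new ray $\mathbf e_G$. The plan is to address each claim in turn, using the minimality of $G$ in $\mathcal{G} \setminus \mathcal{H}$ as the key structural input throughout.

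The consequence of minimality I would exploit at every step is that every element of $\mathcal{G}$ strictly below $G$ already lies in $\mathcal{H} \subseteq \mathcal{G}'$, so that $\mathcal{G}'|_G = \mathcal{H}|_G$ and hence $f(\mathcal{G}'|_G) = f(\mathcal{H}|_G)$.  To prove that $\mathcal{G}'$ is a building set, I would verify the factorization requirement of \Cref{def:building-set} at each flat $F$ by cases on the relative position of $F$ and $G$: if $G \not\subseteq F$ then $\mathcal{G}'|_{\leq F} = \mathcal{G}|_{\leq F}$ and the factorization is inherited from $\mathcal{G}$; if $F = G$ the factorization is inherited from $\mathcal{H}$ via $f(\mathcal{G}'|_G) = f(\mathcal{H}|_G)$; and if $G \subsetneq F$, one splits further by whether $G \in f(\mathcal{G}|_{\leq F})$, either combining the $\mathcal{G}$-factorization at $F$ with the $\mathcal{H}$-factorization at $G$ (replacing the $[\emptyset, G]$-factor with $\prod_{H \in f(\mathcal{H}|_G)} [\emptyset, H]$), or observing that $f(\mathcal{G}'|_{\leq F}) = f(\mathcal{G}|_{\leq F})$ is forced by the existence of some $M \in \mathcal{G}'$ with $G \subsetneq M \subseteq F$.

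For the stellar subdivision claim, two preliminary identifications are in order. First, the ray identity $\mathbf e_G = \sum_{H \in f(\mathcal{G}'|_G)} \mathbf e_H$: this follows because the building-set isomorphism $[\emptyset, G] \cong \prod_{H \in f(\mathcal{G}'|_G)} [\emptyset, H]$ partitions the atoms of $[\emptyset, G]$ disjointly among the $H$'s, so the supports of the $H$'s disjointly cover the support of $G$. Second, that $f(\mathcal{G}'|_G)$ is itself a $\mathcal{G}'$-nested set (so $\sigma \in \Sigma_{\mathcal{G}'}$): any sub-antichain of size at least two has join lying strictly above each participating $H$ but strictly below $G$ in $[\emptyset, G]$ via the product decomposition, hence is not maximal in $\mathcal{G}'|_G$ and so not in $\mathcal{G}'$.

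The heart of the argument is matching the cones of $\Sigma_{\mathcal{G}}$ with those of the stellar subdivision of $\Sigma_{\mathcal{G}'}$ at $\sigma$. For cones not containing $\mathbf e_G$: a $\mathcal{G}'$-nested set $T$ fails to be $\mathcal{G}$-nested precisely when some antichain in $T$ has join equal to $G$, and such an antichain exists if and only if $T \supseteq f(\mathcal{G}'|_G)$, by the product decomposition of $[\emptyset, G]$. For cones containing $\mathbf e_G$: a $\mathcal{G}$-nested set $S \ni G$ matches the cone $\operatorname{cone}(\mathbf e_G, \mathbf e_H : H \in T \setminus \{H_i\})$ of the stellar subdivision, where $T := (S \setminus \{G\}) \cup f(\mathcal{G}'|_G)$ is a $\mathcal{G}'$-nested superset of $f(\mathcal{G}'|_G)$ and $H_i \in f(\mathcal{G}'|_G)$ is a distinguished missing ray. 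The main obstacle will be the bookkeeping that confirms this correspondence on both sides: showing that $T$ is genuinely $\mathcal{G}'$-nested (checking antichains that mix surviving elements of $S$ with newly-added elements of $f(\mathcal{G}'|_G)$), and conversely that $(T \setminus \{H_i\}) \cup \{G\}$ is $\mathcal{G}$-nested for every admissible pair $(T, H_i)$. Both directions reduce to antichain analyses that hinge on the product decomposition of $[\emptyset, G]$ induced by $f(\mathcal{G}'|_G)$, which is precisely the combinatorial content of \cite[Proof of Theorem~4.2]{FM05}.
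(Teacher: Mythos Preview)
The paper does not supply its own proof of this proposition; it is quoted wholesale from \cite[Proof of Theorem~4.2]{FM05} and used as a black box in the proof of Theorem~\ref{thm:onestep}.  So there is no in-paper argument to compare against, and your sketch is essentially a reconstruction of the Feichtner--M\"uller argument.

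Your outline is sound: the case analysis showing $\mathcal{G}'$ is a building set is correct, the identities $\mathbf{e}_G = \sum_{H \in f(\mathcal{G}'|_G)} \mathbf{e}_H$ and the $\mathcal{G}'$-nestedness of $f(\mathcal{G}'|_G)$ are right, and the characterization ``a $\mathcal{G}'$-nested $T$ is $\mathcal{G}$-nested iff $T \not\supseteq f(\mathcal{G}'|_G)$'' is exactly what is needed for the cones avoiding $\mathbf{e}_G$.

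There is one imprecision in the matching for cones through $\mathbf{e}_G$.  You pair a $\mathcal{G}$-nested $S \ni G$ with the cone $\operatorname{cone}(\mathbf{e}_G, \mathbf{e}_H : H \in T \setminus \{H_i\})$ for a single ``distinguished missing ray'' $H_i \in f(\mathcal{G}'|_G)$.  But an arbitrary such $S$ may omit any nonempty subset of $f(\mathcal{G}'|_G)$ (for instance $S = \{G\}$ omits them all), so this only describes those $S$ missing exactly one factor.  The correct face-by-face bijection is $S \leftrightarrow \rho := S \setminus \{G\}$, where the faces of the stellar subdivision containing the new vertex are precisely the sets $\{G\} \cup \rho$ with $\rho \cup f(\mathcal{G}'|_G)$ being $\mathcal{G}'$-nested and $f(\mathcal{G}'|_G) \not\subseteq \rho$.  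Your ``$(T,H_i)$'' parametrization does give the correct bijection on \emph{maximal} cones (purity of both fans forces a maximal $S \ni G$ to miss exactly one factor), and since a simplicial fan is determined by its maximal cones that already suffices; but as written the language suggests a bijection on all faces, which is not quite what you have described.
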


We thus need to understand how Chow rings of fans change under stellar subdivisions.  The following lemma allows us to do this.

\begin{lemma}
Let $A^\bullet(\Sigma)$ denote the Chow ring of a smooth fan $\Sigma$ with rational coefficients.  For a cone $\sigma\in \Sigma$ of dimension $\ell$, let $\widetilde\Sigma$ be the stellar subdivision of $\Sigma$ at $\sigma$, and let $\overline{\operatorname{st}}_\sigma(\Sigma)$ be the closed star of $\Sigma$ at $\sigma$ (i.e.\ the fan consisting of the cones of $\Sigma$ containing $\sigma$).  Then, as graded vector spaces, we have
\[
A^\bullet(\widetilde\Sigma) \simeq A^\bullet(\Sigma) \oplus \bigoplus_{i = 1}^{\ell-1} A^\bullet(\overline{\operatorname{st}}_\sigma(\Sigma))[-i].
\]
\end{lemma}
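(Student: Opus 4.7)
The plan is to realize $A^\bullet(\widetilde\Sigma)$ as an explicit quotient of the polynomial ring $A^\bullet(\Sigma)[y]$, where $y$ is the generator attached to the new ray $\rho_0$ introduced by the stellar subdivision, and then to exhibit a compatible graded basis.

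Let $\rho_1,\dotsc,\rho_\ell$ denote the rays of $\sigma$ and let $\rho_0$ be the new ray, whose primitive vector satisfies $u_{\rho_0}=u_{\rho_1}+\cdots+u_{\rho_\ell}$. Setting $y:=x_{\rho_0}$, the combinatorics of the stellar subdivision yields a presentation
\[
A^\bullet(\widetilde\Sigma)\;\simeq\; A^\bullet(\Sigma)[y]\big/\mathcal J,
\]
where $\mathcal J$ is generated by (a) the products $y\cdot x_\rho$ for every ray $\rho\notin \overline{\operatorname{st}}_\sigma(\Sigma)$, because $\rho_0$ and such $\rho$ fail to lie in a common cone of $\widetilde\Sigma$; (b) the Stanley--Reisner monomial $x_{\rho_1}\cdots x_{\rho_\ell}$, since the rays of $\sigma$ no longer span a cone in $\widetilde\Sigma$; and (c) a correction of the linear relations absorbing the contribution $\langle m,u_{\rho_0}\rangle y=\sum_i\langle m,u_{\rho_i}\rangle y$ for each $m$ in the dual lattice. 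Using the linear relations in (c) to express the $x_{\rho_i}$'s in terms of $y$ modulo classes supported on the closed star, the Stanley--Reisner generator in (b) can be rewritten as a monic polynomial $P(y)\in A^\bullet(\overline{\operatorname{st}}_\sigma(\Sigma))[y]$ of degree exactly $\ell$. Geometrically, $P(y)$ is the combinatorial shadow of the Chern polynomial of the conormal bundle of $V(\sigma)$ in $X_\Sigma$; formally, it is the ``blow-up equation'' that truncates the powers of $y$ at $y^{\ell-1}$.

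From this presentation, I would argue that every element of $A^\bullet(\widetilde\Sigma)$ admits a unique normal form
\[
f_0\;+\;\sum_{i=1}^{\ell-1}y^i f_i, \qquad f_0\in A^\bullet(\Sigma),\; f_i\in A^\bullet(\overline{\operatorname{st}}_\sigma(\Sigma)).
\]
Indeed, the relations in (a) force the coefficient of every positive power of $y$ to be supported on the closed star of $\sigma$, and the relation $P(y)$ reduces $y^\ell$ and higher powers to expressions of the above form. Since $y$ has degree $1$, the summand $y^i f_i$ contributes a shifted copy $A^\bullet(\overline{\operatorname{st}}_\sigma(\Sigma))[-i]$, producing the asserted graded vector-space decomposition.

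The main obstacle is verifying the completeness of the presentation in the first step and the uniqueness of the normal form, i.e.\ ruling out hidden relations that collapse the candidate basis. I would handle this by constructing an explicit $\mathbb Q$-basis of $A^\bullet(\widetilde\Sigma)$ in the spirit of the Feichtner--Yuzvinsky Gr\"obner basis recalled in Theorem~\ref{thm:fy-monomials}: the basis monomials split into those not involving $y$, which form a basis of $A^\bullet(\Sigma)$, and those of the form $y^i\cdot m$ with $1\le i\le \ell-1$ and $m$ a Feichtner--Yuzvinsky monomial for $A^\bullet(\overline{\operatorname{st}}_\sigma(\Sigma))$. A dimension count then matches the right-hand side of the claimed isomorphism degree by degree, confirming that the candidate normal form is truly a basis and completing the proof.
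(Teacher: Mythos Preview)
Your approach differs from the paper's. The paper gives a two-line argument: a stellar subdivision corresponds to blowing up the toric variety $X_\Sigma$ along the torus-invariant subvariety $V(\sigma)$, and the decomposition then follows from Keel's general formula \cite[Theorem~1 in Appendix]{keel} for the Chow ring of a blow-up (stated for smooth varieties along smooth centers satisfying a surjectivity hypothesis that holds automatically in the toric setting). Your strategy---presenting $A^\bullet(\widetilde\Sigma)$ explicitly as $A^\bullet(\Sigma)[y]$ modulo the three families of relations and extracting a normal form $f_0 + \sum_{i=1}^{\ell-1} y^i f_i$---is the combinatorial unpacking of that same formula, and is a reasonable, more self-contained alternative. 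The relations you list are correct, and the spanning half of the argument is fine.

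There is, however, a genuine gap in your final step. To establish uniqueness of the normal form you invoke the Feichtner--Yuzvinsky monomial basis of Theorem~\ref{thm:fy-monomials}. But that theorem applies only to Chow rings $D(\M,\mathcal G)$ arising from a building set on a polymatroid lattice---equivalently, to the specific class of nested-set (Bergman) fans---whereas the lemma is stated for an \emph{arbitrary} smooth fan $\Sigma$. For a general smooth $\Sigma$ there is no notion of an ``FY monomial'' for $A^\bullet(\Sigma)$ or for $A^\bullet(\overline{\operatorname{st}}_\sigma(\Sigma))$, so your dimension count does not go through as written. If you restrict to Bergman fans $\Sigma_{\mathcal G}$ (the only case actually needed for Theorem~\ref{thm:onestep}), your argument can be completed along the lines you indicate; but to prove the lemma in the stated generality you would need an independent linear-independence argument---for instance a Gr\"obner basis for the defining ideal of $A^\bullet(\widetilde\Sigma)$ valid for all smooth fans, or a direct Hilbert-series comparison---and the cleanest such route is precisely the blow-up formula that the paper cites.
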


\begin{proof}
This follows from \cite[Theorem 1 in Appendix]{keel}, which is stated for Chow rings of blow-ups of smooth varieties along smooth varieties satisfying a certain surjectivity condition (which holds for toric varieties).
In our case, the stellar subdivision corresponds to the blow-up of the toric variety $X_\Sigma$ along the torus-invariant subvariety corresponding to the cone $\sigma$.
\end{proof}

%Geometrically, adding a single flat to the building set corresponds to performing an extra blowup, but we caution that the loci being blown-up may not simply be a birational copy of $\mathbb P L_G$.
The last remaining ingredient concerns the star fans of Bergman fans.

\begin{lemma}
Let $G$ be a nonempty flat of $\M$ not necessarily in a building set $\mathcal G'$, and let $f(\mathcal G'|_G) = \{G'_1, \dotsc, G'_\ell\}$ be the $\mathcal G'$-factors.  Then, the closed star of $\Sigma_{\mathcal G'}$ at the cone corresponding to $f(\mathcal G'|_{G})$ is isomorphic to the fan $\Sigma_{\mathcal G'|_{G'_1}}\times \dotsb \times \Sigma_{\mathcal G'|_{G'_\ell}} \times \Sigma_{\mathcal G'/G}$.
\end{lemma}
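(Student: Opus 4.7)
The plan is to produce an explicit face-preserving bijection between the cones of the closed star $\overline{\operatorname{st}}_\sigma(\Sigma_{\mathcal{G}'})$, where $\sigma$ is the cone corresponding to $f(\mathcal{G}'|_G)=\{G'_1,\dots,G'_\ell\}$, and the cones of the product fan $\Sigma_{\mathcal{G}'|_{G'_1}}\times\cdots\times\Sigma_{\mathcal{G}'|_{G'_\ell}}\times\Sigma_{\mathcal{G}'/G}$. Both fans are simplicial, so the task reduces to a face-preserving bijection on their underlying nested-set complexes, which one then promotes to a linear isomorphism of fans via the natural identification $\mathbb{R}^E \simeq \mathbb{R}^{G'_1}\oplus\cdots\oplus\mathbb{R}^{G'_\ell}\oplus\mathbb{R}^{E\setminus G}$. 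The cones of the closed star correspond exactly to $\mathcal{G}'$-nested sets $T$ such that $T\cup f(\mathcal{G}'|_G)$ is itself $\mathcal{G}'$-nested (the closed star comprises all faces of cones containing $\sigma$, and nestedness is hereditary). Given such $T$, I would decompose it as $T=T_1\sqcup\cdots\sqcup T_\ell\sqcup T''$, where $T_i:=T\cap[\varnothing,G'_i]$ and $T'':=\{H\in T:H\not\subseteq G\}$; the building-set property for $\mathcal{G}'|_G$ ensures any element of $T$ contained in $G$ lies in a unique factor $G'_i$. Each $T_i$ is automatically a $\mathcal{G}'|_{G'_i}$-nested set, and the assignment $\psi:T''\to\mathcal{G}'/G$, $H\mapsto H\vee G$, produces a $\mathcal{G}'/G$-nested set $U:=\psi(T'')$.

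To build the inverse, I would introduce a canonical lift $L:\mathcal{G}'/G\to\mathcal{G}'$ with $L(K)\vee G=K$, defined by $L(K):=\bigvee\{F\in f(\mathcal{G}'|_K):F\not\subseteq G\}$. The key structural observation is that if $K=H\vee G$ for some $H\in\mathcal{G}'$ with $H\not\subseteq G$, then $H$ is contained in some $\mathcal{G}'$-factor of $K$, and that factor must by itself cover $K\setminus G$; this forces $K$ to admit a unique $\mathcal{G}'$-factor not contained in $G$, which coincides with $L(K)$. In particular $L(K)\in\mathcal{G}'$, and the inverse sends a compatible tuple $(T_1,\dots,T_\ell,U)$ to $T_1\sqcup\cdots\sqcup T_\ell\sqcup\{L(K):K\in U\}$.

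The main obstacle is the bookkeeping needed to confirm that nestedness is preserved in both directions, and specifically that $T\cup f(\mathcal{G}'|_G)$ is $\mathcal{G}'$-nested whenever $T$ is assembled from a valid tuple. This reduces to a case analysis on antichains in $T\cup f(\mathcal{G}'|_G)$: antichains internal to a single block $T_i\cup\{G'_i\}$ follow from the nestedness of $T_i$ in $\mathcal{G}'|_{G'_i}$; antichains among the $G'_j$'s use the factor-partition property at $G$, which places their joins outside $\mathcal{G}'$; and antichains mixing some $L(K)$'s with factors $G'_j$ not contained in any of the chosen $L(K)$'s have joins whose images under the quotient $H\mapsto H\vee G$ land on joins of antichains in $U$, hence outside $\mathcal{G}'/G$ by nestedness of $U$. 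The injectivity of $\psi$ in the forward direction is likewise a corollary of the uniqueness of the $\mathcal{G}'$-factor of $K$ not contained in $G$. Once these verifications are in place, the desired fan isomorphism is immediate from the bijection on nested sets and the linear identification of ambient spaces.
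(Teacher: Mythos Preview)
Your approach is genuinely different from the paper's. The paper gives a one-line proof: it cites \cite[Theorem~1.6]{brauner-eur-pratt-vlad}, which says that for a single $G'\in\mathcal{G}'$ the closed star of $\Sigma_{\mathcal{G}'}$ at the ray $\mathbb{R}_{\geq 0}\mathbf{e}_{G'}$ splits as $\Sigma_{\mathcal{G}'|_{G'}}\times\Sigma_{\mathcal{G}'/G'}$, and then applies this $\ell$ times, once for each factor $G'_i$. Your argument, by contrast, builds the bijection on nested sets all at once. The iterative approach buys brevity and offloads the delicate verification to the cited source; your approach is self-contained and makes the combinatorics explicit.

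Your nested-set bijection is essentially correct, but one step is only implicit: for the maps to be mutually inverse you need $H=L(H\vee G)$ for every $H\in T''$, not merely $H\subseteq L(H\vee G)$. You assert uniqueness of the factor of $K=H\vee G$ not contained in $G$, which gives $H\subseteq L(K)$, but equality requires using that $\{H\}\cup f(\mathcal{G}'|_G)$ is $\mathcal{G}'$-nested: if some $G'_j\leq L(K)$ had $G'_j\not\leq H$, then $\{H\}\cup\{G'_j:G'_j\leq L(K),\,G'_j\not\leq H\}$ would be an antichain in the nested set whose join is $L(K)\in\mathcal{G}'$, a contradiction. This is the content of your remark on injectivity of $\psi$, but it should be stated as $H=L(K)$ rather than as injectivity.

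The more serious gap is the final sentence. The ``natural identification'' $\mathbb{R}^E\simeq\bigoplus_i\mathbb{R}^{G'_i}\oplus\mathbb{R}^{E\setminus G}$ does \emph{not} carry the rays of the closed star to the rays of the product fan. For $H\in T''$ one has $\mathbf{e}_H=\sum_{G'_j\subseteq H}\mathbf{e}_{G'_j}+\mathbf{e}_{H\setminus G}$, which lands in the relative interior of a higher-dimensional cone of the product rather than on the ray $\mathbf{e}_{(H\vee G)\setminus G}$. So the passage from a face-poset bijection to a linear fan isomorphism is not ``immediate'' as written. Your combinatorial bijection (together with the matching of ranks) suffices to equate the Hilbert series of the two Chow rings, which is all that Theorem~\ref{thm:onestep} ultimately uses; but the lemma as stated asserts a fan isomorphism, and for that you either need to quotient by $\operatorname{span}(\sigma)$ first, or produce a different linear map, or---as the paper does---invoke the cited result where this has been handled.
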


\begin{proof}
This follows from a repeated application of \cite[Theorem 1.7]{brauner-eur-pratt-vlad}, which states that for any $G'\in \mathcal G'$, the closed star of $\Sigma_{\mathcal G'}$ at the ray $\R_{\geq 0}\{\mathbf e_{G'}\}$ is isomorphic to the product $\Sigma_{\mathcal G'|_{G'}} \times \Sigma_{\mathcal G'/G'}$.
\end{proof}

\begin{proof}[Proof of Theorem~\ref{thm:onestep}]
Let $\{G'_1, \dotsc, G'_\ell\}$ be the $\mathcal G'$-factors of $\mathcal{G}'|_G$.  Note the isomorphism $\Sigma_{\mathcal G'|_{G'_1}}\times \dotsb \times \Sigma_{\mathcal G'|_{G'_\ell}} \times \Sigma_{\mathcal G'/G} \simeq \Sigma_{\mathcal G'|_G} \times \Sigma_{\mathcal G'/G}$.
Combining the second part of Proposition~\ref{prop:onestep} with the two preceding lemmas yields the desired result.
\end{proof}

\section{The failure of inequalities}\label{sec:eg}

In this section, we discuss the potential failure of the properties in Figure~\ref{fig:hierarchy} for the Hilbert series of the ring $D(\M,\mathcal{G})$.

\subsection{Failure of log-concavity}

We provide examples where the coefficients of $\H_\M^{\mathcal G}(x)$ do not form a log-concave sequence.
It will be useful to consider $\H_\M^{\mathcal G}(x)$ as the Poincar\'e polynomial of a wonderful variety constructed via a sequential blow-up, which was reviewed in Section~\ref{sec:geom}.
The strategy common to all of our examples is the following simple observation: if a sequence $(a,b,c)$ is log-concave but $2b < a+ c$ (i.e.\ not concave), then $(a+\ell, b+\ell, c+\ell)$ is not log-concave for all large $\ell > 0$, since $(b+\ell)^2 - (a+\ell)(c+\ell) = (b^2 - ac) + (2b - a - c)\ell$.

\begin{example}[$\H_\M^\G(x)$ need not have log-concave coefficients]
Let $\M$ be the uniform matroid of rank $9$ on the ground set $E = [8] \sqcup [\overline{10}]$ of 18 elements, where $[8] = \{1, \dotsc, 8\}$ and $[\overline{10}] = \{\overline{1}, \dotsc, \overline{10}\}$.  Let 
\[
\G = E \cup \{12, 123, 1234, \dotsc, 12345678\} \cup \binom{[\overline{10}]}{8} \cup \{E\},
\]
which one verifies to be a building set on $\M$ by noting that every interval $[\emptyset, F]$ for a proper flat $F \neq E$ is a Boolean lattice.

As uniform matroids are realizable over $\CC$, following the construction of wonderful varieties given in Section~\ref{sec:geom}, we find that the Chow polynomial $\H_\M^\G(x)$ is the Poincar\'e polynomial of the variety obtained as a sequential blow-up of $\mathbb P^8$, in two steps:
\begin{enumerate}
\item First, one blows up a complete flag
$
\mathbb P L_{12345678} \subsetneq \dotsb \subsetneq \mathbb PL_{12} \subsetneq \mathbb P L_1
$
in $\mathbb P^8$, starting with the point, then (the strict transform of) the line, then the plane, etc.
\item  Then, one blows up $\binom{10}{8} = 45$ more points not lying on the hyperplane $\mathbb P L_1$.
\end{enumerate}
By induction, one verifies that blowing up a complete flag in a projective space yields a variety whose Betti numbers are the binomial coefficients.  In our case, we have that after step (1) the Betti numbers read $(1,8,28,56,70,56,28,8,1)$.
Note that $2\cdot 28 - (56+8) = - 8 < 0$.
Then, for step (2), when one blows up a point, we note that the Betti numbers change by adding $1$ to each of the entries that are not the first or the last.
In our case, one may also deduce this geometric fact from Theorem~\ref{thm:onestep}.
In particular, the final Betti numbers read $(1, 8 + 45, 28+45,56+45, \dotsc)$, and at the third entry, we find that log-concavity fails since $73^2 - 53 \cdot 101 = -24 < 0$.
\end{example}

\begin{example}[$\H_\M^{\G_{\min}}(x)$ need not have log-concave coefficients] \label{eg:eg1}
The geometry in the previous example of blowing up a complete flag in $\mathbb P^8$ and then blowing up 45 additional general points can be realized as a minimal building set on a matroid in the following way.  We do this in two steps, corresponding to the steps (1) and (2) in the geometry described in the previous example.
We will use notions in matroid theory such as free extensions and principal extensions by flats, which can be found in \cite[Chapter 7]{oxley}.

(1) First, we construct a matroid $\M_9$ of rank $9$ on the ground set $\{1, 2, 2', 3, 3', \dotsc, 9, 9'\}$ as follows.  Let $\M_1 = \U_{1,\{1\}}$, the Boolean matroid of rank $1$ on the set $\{1\}$.  Then, for $i > 1$, we inductively define $\M_i := (\M_{i -1} \oplus i) + i'$.  That is, $\M_i$ is obtained from $\M_{i-1}$ by adding a coloop named $i$, and then taking the free extension of $\M_{i-1} \oplus i$ by an additional element $i'$.
In general, for a loopless matroid $\M$ on ground set $E$, the connected flats of the free extension $\M+e$ are the proper connected flats of $\M$ and $E\cup e$.
Hence, we find that the connected flats of $\M_9$ are $\{1, 122', 122'33', \dotsc, 122'33'\dotsm 99'\}$.
That is, the matroid $\M_9$ has the property that its connected flats form a complete chain in $\mathcal L(\M_9)$.

(2) Now, we extend the matroid $\M_9$ (on a ground set of size $19$) to a matroid (of the same rank, $9$) on $19+45\cdot 9 = 424$ elements, such that the set of connected flats will consist of those from $\M_9$ together with an additional $45$ flats of corank $1$.
For a matroid $\M$ of rank $r$ on a ground set $E$, denote by $\M + \U_{r-1,r}$ the matroid on ground set $E \sqcup [r]$ constructed as follows.  First, perform ($r-1$)-many times the free extension of $\M$ to obtain $\M + \{1, \dotsc, r-1\}$, and then take the principal extension of the resulting matroid by the flat $\{1, \dotsc, r-1\}$.  (That is, we extend $\M$ by $r$-many elements as freely as possible with the sole constraint that the $r$-many elements that were added do not form a basis).  Note that if a flat $F\subseteq E \sqcup [r]$ of $\M + \U_{r-1,r}$ properly intersects $[r]$, then $F\cap [r]$ consists of coloops of $F$, since by construction no element of $[r]$ is in the closure of any proper flat of $\M$.  Thus, the proper connected flats of $\M+\U_{r-1,r}$ are the proper connected flats of $\M$ and $[r]$.
The matroid we seek is $\M_9 + \U_{8,9} + \dotsb + \U_{8,9}$ (where $+ \U_{8,9}$ is performed $45$ times).

This matroid on $424$ elements can be realized over any infinite field as follows.
Begin with a $9 \times 19$ matrix of the form
\[
A_9 = \begin{bmatrix}
\begin{matrix}
1 & 0 & * & 0 & * &0 & * \\
\ & 1 & 1 & 0 & * &0 & *\\
\ & \ & \ & 1 & 1 &0 & *\\
\ & \ & \ & \ & \ &1 & 1
\end{matrix} & \dotsb \\
\vdots & \ddots
\end{bmatrix},
\]
where the $*$ entries are generic.  (The row span of) this matrix realizes the matroid $\M_9$.  Then, for $i = 1, \dotsc, 45$, let $U_i$ each be a distinct generic choice of a $9\times 9$ matrix of rank $8$.  The matroid we seek is realized by (the row span of) the matrix
\[
[A_9 | U_1 | \dotsb | U_{45}].
\]
The resulting wonderful variety corresponding to the minimal building set is the sequential blow-up of $\mathbb P^8$ identical to the one described in Example~\ref{eg:eg1}, and hence the Chow polynomial $\H_\M^{\mathcal G}(x)$ fails to have log-concave coefficients.
\end{example}

\begin{example}[$\H_{\mathsf{U}_{n,n}}^{\G}(x)$ need not have log-concave coefficients]
We provide a building set on a Boolean matroid for which the Chow polynomial has non-log-concave coefficients.  That is, the $h$-vector of the corresponding nestohedron need not be log-concave.  For this connection to nestohedra, we point to \cite{postnikov-reiner-williams}.

First, let us consider the following geometric scenario.  For $d>0$ and $N>d$, we consider blowing up a flag of subspaces in $\mathbb{P}^N$ of dimensions $0,1,\dotsc, d$.  Note that for any $n>0$, the first $n$ Betti numbers of this blow-up are independent of $N$ provided that $N$ is large enough.
In fact, they are the partial sums of binomial coefficients:
\[
\left(\binom{d+1}{0}, \binom{d+1}{0}+\binom{d+1}{1}, \binom{d+1}{0}+\binom{d+1}{1}+\binom{d+1}{2}, \dotsc\right).
\]
For instance, when $d = 5$, the sequence reads $(1, 7, 22, 42, \dotsc)$.  Then, if we blow-up additional points, for instance $39$ additional points, we obtain that $(22+39)^2 - (7+39)\cdot (42+39) = -5<0$.

This geometric scenario can be realized as the construction of a wonderful variety from a building set on a Boolean matroid, as follows.  Take $\M = \mathsf{U}_{45,[45]}$, the Boolean matroid of rank $45$ on the set $[45]$.  The collection of subsets $\{[44], [43], \dotsc, [39]\} \cup \{[45]\setminus \{i\} : i\in [39]\}$ along with the atoms and the ground set $[45]$ forms a building set.  The ordering $([44], [43], \dotsc, [39], [45]\setminus \{1\}, \dotsc, [45]\setminus \{39\})$ refines the partial order by reverse inclusion, and the resulting sequential blow-up gives the desired geometric scenario above.
\end{example}

\subsection{Koszulity, gamma-positivity, and real-rootedness}

A result by Coron \cite[Theorem~3.12]{coron} states that if the lattice of flats $\mathcal{L}(\M)$ of a matroid is supersolvable, then the Chow ring $D(\M,\mathcal{G}_{\min})$ is Koszul. This generalizes a result by Dotsenko \cite{dotsenko} for the braid matroid $\mathsf{K}_n$. On the other hand, Mastroeni and McCullough showed that for all matroids $\M$, the ring $D(\M,\mathcal{G}_{\max})$ is Koszul. 

As is explained in \cite[Section~5.4]{ferroni-matherne-stevens-vecchi}, whenever the degree of the Hilbert series of a graded Artinian Koszul algebra is not more than $4$, then it has only real zeros. Furthermore, by \cite[Theorem~1.8]{ferroni-matherne-stevens-vecchi}, for any matroid $\M$ we have that $\H_{\M}^{\mathcal{G}_{\max}}(x)$ is $\gamma$-positive.

The following question arises naturally in this context.

\begin{question}
    Is there a loopless matroid $\M$ and a building set $\mathcal{G}$ such that $D(\M,\mathcal{G})$ is Koszul and $\H^{\mathcal{G}}_{\M}(x)$ fails to be
    \begin{enumerate}[(i)]
        \item $\gamma$-positive, or
        \item real-rooted?
    \end{enumerate}
\end{question}

We note that the real-rootedness question for $\H_{\mathsf{K}_n}^{\mathcal{G}_{\min}}(x)$ was posed by Aluffi, Chen, and Marcolli in \cite[Conjecture~1]{aluffi-marcolli-chen}, while in this special case, $\gamma$-positivity follows directly from Proposition \ref{prop:quadratic} (see \cite[Theorem~1.2]{aluffi-marcolli-chen}).

\bibliographystyle{amsalpha}
\bibliography{bibliography}

\end{document}